	\def\MR#1{}
\title{On an Erd\H{o}s similarity problem in the large}
\author{Xiang Gao}
\address{Department of Mathematics, Hubei Key Laboratory of Applied Mathematics, Hubei University, Wuhan 430062, China}
\email{gaojiaou@gmail.com}
\author{Yuveshen Mooroogen}
\address{Department of Mathematics, University of British Columbia, Vancouver V6T 1Z2, Canada}
\email{yuveshenm@math.ubc.ca}
\author{Chi Hoi Yip}
\address{School of Mathematics\\ Georgia Institute of Technology\\ Atlanta, GA 30332\\ United States}
\email{cyip30@gatech.edu}
\keywords{Erd\H{o}s similarity problem, packing dimension, metric number theory, distribution modulo one.}
\subjclass[2020]{Primary: 28A75, 28A78. Secondary: 28A80, 11K55, 11J71, 11B05}
\newtheorem{thm}{Theorem}[section]
\newtheorem{lem}[thm]{Lemma}
\newtheorem{prop}[thm]{Proposition}
\newtheorem{cor}[thm]{Corollary}
\newtheorem{question}[thm]{Question}
\newtheorem{externaltheorem}{Theorem}
\theoremstyle{definition}
\newtheorem{ex}[thm]{Example}
\newtheorem{rem}[thm]{Remark}
\newcommand{\Q}{\mathbb{Q}}
\newcommand{\R}{\mathbb{R}} 
\newcommand{\N}{\mathbb{N}}
\newcommand{\Z}{\mathbb{Z}}
\renewcommand{\P}{\mathbb{P}}
\newcommand{\fract}[1]{\left \langle #1 \right \rangle}
\begin{document}

\begin{abstract}
In a recent paper, Kolountzakis and Papageorgiou ask if for every $\epsilon \in (0,1]$, there exists a set $S \subseteq \mathbb{R}$ such that $\vert S \cap I\vert \geq 1 - \epsilon$ for every interval $I \subset \mathbb{R}$ with unit length, but that does not contain any affine copy of a given increasing sequence of exponential growth or faster. This question is an analogue of the well-known Erd\H{o}s similarity problem. In this paper, we show that for each sequence of real numbers whose integer parts form a set of positive upper Banach density, one can explicitly construct such a set $S$ that contains no affine copy of that sequence. Since there exist sequences of arbitrarily rapid growth that satisfy this condition, our result answers Kolountzakis and Papageorgiou's question in the affirmative. A key ingredient of our proof is a generalization of results by Amice, Kahane, and Haight from metric number theory. In addition, we construct a set $S$ with the required property---but with $\epsilon \in (1/2, 1]$---that contains no affine copy of $\{2^n\}$.
\end{abstract}

\maketitle

\section{Introduction}

Many results in number theory, combinatorics, harmonic analysis, and geometric measure theory show that if a set is ``large'' in some quantitative sense, relative to some ambient space $\R^n$ or $\Z^n$, then it must contain a ``copy'' of a prescribed configuration (e.g. a given pattern, or the vertices of a polyhedron). One famous example is Szemerédi’s theorem \cite{Szemeredi75}, which asserts that if a set of integers has positive upper Banach density, then it must contain arbitrarily long arithmetic progressions.

The well-known \textit{Erd\H{o}s similarity problem} also belongs to this family of problems. To state it, we first recall the following terminology from \cite{Kolountzakis97, LabaPramanik09, CLP23}: a set $A \subseteq \R$ is \textit{universal} in a class $\mathscr{S}$ of subsets of $\R$ if every set $S \in \mathscr{S}$ contains a nontrivial affine copy of $A$; that is, if there exist $t \in \R$ and $x \in \R \setminus \{0\}$ such that $xA + t \subseteq S$. Let $\mathscr{E}$ be the collection of Lebesgue measurable subsets of $\mathbb{R}$ that have positive Lebesgue measure. One can show using the Lebesgue density theorem that every finite set $A \subset \mathbb{R}$ is universal in $\mathscr{E}$. Motivated by this observation, Erd\H{o}s \cite{Erdos74} conjectured that all infinite sets are non-universal in $\mathscr{E}$. This conjecture is known as the Erd\H{o}s similarity problem. It suffices to prove it for all sequences of real numbers decreasing to $0$ \cite{S00}. 

\begin{question}\label{q:erdos}
    Does there exist a decreasing sequence $a_n \to 0$ that is universal in $\mathscr{E}$?
\end{question}

This question is currently open, but several classes of non-universal sequences are known \cite{Falconer84, Bourgain87, Kolountzakis97, HumLac98, GLW23}. We refer the reader to \cite{S00} for a comprehensive survey of this problem. Intuitively, the sparser the sequence $\{a_n\}$ is, the easier it should be for some of its affine copies to be contained in a set of positive measure. It is known that if $\{a_n\}$ is \textit{sublacunary} (that is, if $\lim_{n \to \infty} a_{n+1}/a_n=1$), then it is not universal in $\mathscr{E}$ \cite{Falconer84, Eigen85}. This decay condition ensures that $\{a_n\}$ is not \textit{too} sparse. Kolountzakis \cite{Kolountzakis97} showed that the condition that $\{a_n\}$ is sublacunary can be weakened to the assumption that the sequence $\{a_n\}$ contains a large subset with ``large gaps".  On the other hand, Bourgain \cite{Bourgain87} showed that if $A$ has special arithmetic structure in the sense that $A$ can be written as the sumset of three infinite sets, then it is not universal in $\mathscr{E}$. To the best of our knowledge, Bourgain's result is the only known theorem that gives examples of non-universal sequences $\{a_n\}$ such that $-\log a_n \neq o(n)$. For exponential sequences that do not enjoy this special arithmetic structure---for example $\{2^{-n}\}$---the problem remains open \cite[page 2]{K23}. 

In this paper, we consider the following analogue of the Erd\H{o}s similarity problem. For each $\epsilon \in (0,1]$, consider the collection $\mathscr{R}(\epsilon)$ of measurable sets $S \subseteq \mathbb{R}$ such that $\vert S \cap I\vert \geq 1 - \epsilon$ for every interval $I \subset \mathbb{R}$ with unit length. We refer to the elements of $\mathscr{R}(\epsilon)$ as $(1-\epsilon)$-\textit{large sets}. This condition was first studied in \cite{BKM23}, where Bradford, Kohut, and the second author proved that the set $\mathbb{N}$ of natural numbers is not universal in $\mathscr{R}(\epsilon)$ for every $\epsilon \in (0,1]$. This result motivates the following question of Kolountzakis and Papageorgiou \cite{KolountzakisPapageorgiou23}, which can be viewed as an Erd\H{o}s similarity problem \textit{in the large}.

\begin{question}\label{q:large-erdos}
Does there exist an increasing sequence $a_n \to \infty$ that is universal in $\mathscr{R}(\epsilon)$ for some $\epsilon \in (0,1]$?
\end{question}

It is not clear if progress on either question implies progress on the other. Indeed, the assumption of large \textit{Lebesgue measure} distinguishes this question from the original Erd\H{o}s similarity problem, as well as other results from geometric measure theory where one seeks a set of zero Lebesgue measure but large Hausdorff dimension or Fourier dimension that does not contain any copy of a given configuration. See \cite{Keleti08, Mathe17, LiangPramanik} and the references therein.  

In \cite{KolountzakisPapageorgiou23}, Kolountzakis and Papageorgiou answered Question \ref{q:large-erdos} negatively for sub-exponential sequences, that is, sequences such that $\log{a_n} = o(n)$. 

\begin{externaltheorem}[Kolountzakis--Papageorgiou]\label{th:kolountzakispapageorgiou23}
    Let $\{a_n\}$ be a sequence of real numbers such that $a_0 = 0$, $a_{n+1} - a_n \geq 1$ for all $n$ in $\mathbb{N}$, and $\log{a_n} = o(n)$. For each $\epsilon \in (0,1]$, there exists a $(1-\epsilon)$-large set $S \subset \mathbb{R}$ that does not contain any affine copy of $\{a_n\}$.
\end{externaltheorem}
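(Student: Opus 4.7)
The strategy is to construct $S$ as the complement $\R \setminus E$ of an explicit obstacle set $E$ with $|E \cap I| \leq \epsilon$ for every unit interval $I$, so that $S$ is automatically $(1-\epsilon)$-large. The work lies in arranging that every nontrivial affine copy of $\{a_n\}$ meets $E$. The tempting choice of a $1$-periodic $E = F + \Z$ with $|F| \leq \epsilon$ is ruled out by the elementary example $\{a_n\} = \N$: the copy $\N + t$ meets a periodic $E$ only when $\{t\} \in F$, forcing $|F| = 1$. So $E$ must be genuinely non-periodic, with its holes drifting from one unit interval to the next.

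Accordingly, I would take
\[
E \,=\, \bigcup_{k \in \Z}\bigl[\, k + \phi(k),\; k + \phi(k) + \epsilon\,\bigr]
\]
for a suitable ``phase function'' $\phi \colon \Z \to [0, 1-\epsilon]$. The measure condition $|E \cap [k, k+1)| = \epsilon$ is then automatic. The affine copy $x\{a_n\} + t$ meets $E$ if and only if there is an $n \geq 0$ with $\{xa_n + t\} \in [\phi(k_n),\, \phi(k_n) + \epsilon]$, where $k_n := \lfloor xa_n + t \rfloor$. A natural ansatz is $\phi(k) = \{k\alpha\}$ for a well-chosen irrational $\alpha = \alpha(\epsilon, \{a_n\})$, which reduces the hitting condition to the requirement that the sequence $\{\alpha\, k_n(x,t)\}_{n \geq 0}$ be $\epsilon$-dense in $[0, 1)$, uniformly in $(x, t) \in (\R \setminus \{0\}) \times \R$.

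The heart of the argument is therefore a quantitative equidistribution statement for $\{\alpha\, k_n(x, t)\}$ modulo $1$. For each fixed $(x, t)$, equidistribution holds for Lebesgue-a.e.\ $\alpha$ by classical Weyl-type theorems. The hypothesis $\log a_n = o(n)$ should make this equidistribution quantitative, with a discrepancy bound depending only weakly on $x$ and $t$, while the gap condition $a_{n+1} - a_n \geq 1$ ensures the integer values $k_n(x, t)$ are sufficiently spread out. The main obstacle, as I see it, is the uniformity: passing from ``a.e.\ $\alpha$ works for each $(x, t)$'' to ``a single $\alpha$ works for every nonzero $x$ and every $t$''. I would attack this via a Fubini-type measure argument on the set of bad pairs $(\alpha, x)$, using the quantitative sub-exponential equidistribution to control the bad slice at a single $\alpha$; a continuity/approximation argument based on the gap condition would then upgrade ``a.e.\ $x$'' to ``every $x$''. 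The final bookkeeping matches the $\epsilon$-density scale of the phases against the hole width $\epsilon$, which should close the construction.
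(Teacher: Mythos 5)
This statement is an external theorem quoted from Kolountzakis--Papageorgiou; the present paper does not prove it but records that the known proof is probabilistic, in the spirit of Kolountzakis's earlier work. Measured against that, your proposal is a plan rather than a proof, and the step you defer --- uniformity of the hitting property over \emph{all} pairs $(x,t)$ with $x\neq 0$ --- is not a technical afterthought but the entire content of the theorem. Two concrete problems. First, the stated reduction is wrong: $\epsilon$-density of the phase sequence $\{\alpha k_n(x,t)\}$ in $[0,1)$ does not imply that some point of the affine copy lands in its own window $[\phi(k_n),\phi(k_n)+\epsilon]$; what you actually need is that the \emph{difference} sequence $\langle xa_n+t\rangle-\langle \alpha k_n\rangle \bmod 1$ hits $[0,\epsilon]$ for some $n$, and this couples $x$ with $\alpha$. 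Second, once the phases are the deterministic sequence $\{k\alpha\}$ rather than independent random variables, there are resonant choices of $x$ (rationally related to $\alpha$, or to $1/\alpha$, etc.) for which that difference sequence is far from equidistributed, and a Fubini argument over $(\alpha,x)$ can at best exclude a null set of such $x$ for a.e.\ $\alpha$. The proposed upgrade from ``a.e.\ $x$'' to ``every $x$'' by continuity is not available: the property ``$x\{a_n\}+t$ meets a fixed closed set for some $n$'' is not stable under perturbation of $x$, and indeed this very paper is built around exceptional sets $E(\{a_n\})$ of dilations that are Lebesgue-null yet can be uncountable and even of full Hausdorff dimension (for lacunary sequences, by de Mathan and Pollington). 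So a measure-zero bound on the bad $x$'s does not finish the argument.

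By contrast, the probabilistic proof uses i.i.d.\ phases precisely so that, for each $(x,t)$ in a bounded window, the first $N$ points of the copy miss the obstacle with probability at most $(1-\epsilon)^{cN}$ (the $1$-separation making the relevant $\phi(k_n)$ independent), while a discretization of the $(x,t)$-parameter space at scale $1/a_N$ has only $O(a_N^2)=e^{o(N)}$ cells by the hypothesis $\log a_n=o(n)$; the union bound then closes. Your deterministic ansatz discards exactly the independence that makes this entropy-versus-concentration comparison work, and supplies no substitute. To salvage your approach you would need a genuinely uniform (over $x$ and $t$) quantitative statement about the joint distribution of $(\langle xa_n+t\rangle, \langle\alpha\lfloor xa_n+t\rfloor\rangle)$ for a single fixed $\alpha$, which is a substantially harder Diophantine problem than the a.e.\ statements you invoke, and is false for some $\alpha$ without further care.
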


Their proof uses probabilistic techniques akin to those first developed by Kolountzakis in \cite{Kolountzakis97} to study the Erd\H{o}s similarity problem. 
Kolountzakis and Papageorgiou note that ``it remains an open question if a similar [set $S$] can be constructed when [the sequence $\{a_n\}$] grows exponentially or faster''.  Indeed, due to the nature of their probabilistic construction, some growth conditions must be imposed on $\{a_n\}$. In particular, it does not appear straightforward to modify their construction to handle a sequence $\{a_n\}$ with $\log a_n \neq o(n)$.


In Example \ref{ex:superexp}, we identify a family of sequences that can grow arbitrarily fast, and we construct explicit large sets $S$ that do not contain any affine copies of these sequences. This answers their question in the affirmative. We also provide new sufficient conditions for an increasing sequence $a_n \to \infty$ to be non-universal in $\mathscr{R}(\epsilon)$ for every $\epsilon \in (0,1]$. 

A key innovation in our approach is that for a given sequence $\{a_n\}$, we reduce Question \ref{q:large-erdos} to studying the size of the \textit{exceptional set} (of dilates)
\begin{align*}
    E:=E(\{a_n\}) = \{x \in \mathbb{R}: \langle xa_n\rangle \text{ is not dense in } [0,1)\},
\end{align*}
where $\langle y \rangle$ denotes the \textit{fractional part} of a real number $y$. We show that if $E$ is ``sufficiently small", then there exists a large set that does not contain any affine copy of $\{a_n\}$. To control the size of the set $E$, we employ new tools from metric number theory. In another direction, we show that it is sufficient to estimate the packing dimension of $E$. To the best of our knowledge, neither of these techniques have previously been applied to the Erd\H{o}s similarity problem or other similar problems.

In Section \ref{sec:sublacunary}, we prove the following theorem, which provides a novel connection between Question~\ref{q:large-erdos} and packing dimension.

\begin{thm}\label{thm: packing}
     Let $\{a_n\}$ be a sequence of real numbers. If the packing dimension of $E=E(\{a_n\})$ satisfies $\dim_P(E) < 1/2$, then for each $\epsilon \in (0,1]$, there exists a $(1-\epsilon)$-large set $S \subset \mathbb{R}$ that does not contain any affine copy of $\{a_n\}$.
\end{thm}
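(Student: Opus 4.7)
The goal is to construct a measurable set $V \subseteq \mathbb{R}$ with $|V \cap I| \leq \epsilon$ for every unit interval $I$, such that for every $x \neq 0$ and every $t \in \mathbb{R}$, at least one term of the sequence $\{xa_n + t\}_{n \geq 1}$ lies in $V$. Then $S := \mathbb{R} \setminus V$ is automatically $(1-\epsilon)$-large and contains no affine copy of $\{a_n\}$.

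Split by whether $x \in E$. For $x \notin E$, the sequence $\{\langle xa_n \rangle\}$ is dense in $[0,1)$ by definition of $E$, so any translate $\{\langle xa_n + t \rangle\}$ is dense as well and meets every nonempty open subset of $[0,1)$. Hence, provided $V$ contains a nonempty open interval inside every unit interval of $\mathbb{R}$, the case $x \notin E$ is handled uniformly.

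The substantive case is $x \in E$, where the orbit closure $F_x := \overline{\{\langle xa_n \rangle\}}$ is a proper closed subset of $\mathbb{R}/\mathbb{Z}$ that can be very small (for instance, a finite set when $\{a_n\} \subseteq \mathbb{Z}$ and $x \in \mathbb{Q}$). In such situations a purely periodic construction $V = U + \mathbb{Z}$ cannot succeed, so $V$ must vary across unit intervals, adapted to the fine structure of $E$. The hypothesis $\dim_P(E) < 1/2$ enters through the countable stability of packing dimension: decompose $E = \bigcup_{i \geq 1} E_i$ with $\overline{\dim}_B(E_i) < 1/2$, and build $V = \bigcup_{i} V_i$ where each $V_i$ catches the bad affine copies with $x \in E_i$ while contributing only a small amount of measure. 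Each $V_i$ is produced by a multi-scale covering argument: at dyadic scale $\delta = 2^{-k}$, cover $E_i$ by $O(\delta^{-1/2 + o(1)})$ intervals of length $\delta$, and for every such covering interval adjoin to $V_i$ a family of removal intervals of length $O(\delta)$, placed in appropriate unit intervals of $\mathbb{R}$, that block every affine copy with $x$ in the covering interval. Summing across dyadic scales yields a geometric series whose convergence is exactly what the threshold $1/2$ ensures, and weighting the $E_i$'s by $2^{-i}$ keeps $|V \cap I| \leq \epsilon$ for every unit interval $I$.

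The main obstacle is the combinatorial step of this construction: showing that at each dyadic scale $\delta$, one can indeed catch every bad affine copy with $x$ in a covering interval of $E_i$ using a controlled number of short removal intervals. This requires a quantitative stability analysis of the orbits $\{xa_n + t\} \pmod 1$ under small perturbations of $x$, so that a single batch of catching intervals suffices for a whole covering interval. The exponent $1/2$ is sharp for the argument in the sense that a weaker bound on $\dim_P(E)$ would produce a divergent series and break the measure control on $V$.
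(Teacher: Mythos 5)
There is a genuine gap here: what you have written is a plan whose central step is explicitly left open, and that step is the entire difficulty. You defer to ``a quantitative stability analysis of the orbits $\{xa_n+t\} \pmod 1$ under small perturbations of $x$,'' but such stability is not available: perturbing $x$ by $\delta$ moves $xa_n$ by $\delta a_n$, so only the terms with $a_n \lesssim 1/\delta$ behave coherently across a covering interval of width $\delta$. To block \emph{every} translate $t$ for a fixed $x$ you need $\bigcup_n (V - xa_n) = \mathbb{R}$, and if only $k(\delta)$ terms are usable at scale $\delta$, the removal intervals attached to one covering interval must have density at least about $1/k(\delta)$ in the relevant unit intervals. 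Summing over the $\delta^{-1/2+o(1)}$ covering intervals gives density $\delta^{-1/2+o(1)}/k(\delta)$, which diverges unless the sequence has at least $\delta^{-1/2}$ terms below $1/\delta$, i.e.\ roughly $a_n \lesssim n^2$. The theorem assumes no such growth restriction (and the paper's motivating examples grow arbitrarily fast), so the scheme as described cannot close. Your claim that the exponent $1/2$ is ``exactly what the geometric series needs'' is therefore not substantiated; in the actual proof the threshold $1/2$ arises for a completely different reason.

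The paper avoids all of this by never trying to catch the exceptional dilations directly. The key lemma (Lemma~\ref{lem: construction}) shows that if the quotient set $EE^{-1}$ is not all of $\mathbb{R}$, one may pick $y \notin EE^{-1}$ and take $S = T \cap yT$ with $T = \{x : \langle x\rangle \le 1-\alpha\}$: a copy with dilation $y' \in E$ landing in $yT$ would, after rescaling by $y^{-1}$, force $y'y^{-1} \in E$ and hence $y \in EE^{-1}$, a contradiction. The hypothesis $\dim_P(E) < 1/2$ is then used only to show $EE^{-1} \neq \mathbb{R}$, via $\dim_H(E \times E) \le 2\dim_P(E) < 1$ together with the Lipschitz bound for the division map on bounded pieces and countable stability. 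I would encourage you to look for a reduction of this kind: the intersection $T \cap yT$ handles all $x \in E$ at once with no information about their orbit closures, which is precisely the information your multi-scale construction would need and cannot obtain.
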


As a corollary of Theorem~\ref{thm: packing}, we deduce that sublacunary sequences are not universal in $\mathscr{R}(\epsilon)$ for every $\epsilon \in (0,1]$. This can be viewed as an analogue of a result on the Erd\H{o}s similarity problem, independently proved by Eigen \cite{Eigen85} and Falconer \cite{Falconer84}, which asserts that sublacunary sequences converging to zero are not universal in $\mathscr{E}$. 

\begin{cor}\label{cor:sublacunary-result}
Let $\{a_n\}$ be an unbounded sublacunary increasing sequence of real numbers. For each $\epsilon \in (0,1]$, there exists a $(1-\epsilon)$-large set $S \subset \mathbb{R}$ that does not contain any affine copy of $\{a_n\}$.
\end{cor}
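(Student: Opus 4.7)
The plan is to deduce Corollary \ref{cor:sublacunary-result} directly from Theorem \ref{thm: packing}: it suffices to show that whenever $\{a_n\}$ is an unbounded sublacunary increasing sequence, the exceptional set $E = E(\{a_n\})$ satisfies $\dim_P(E) < 1/2$. In fact, I would aim for the stronger conclusion $\dim_P(E) = 0$. The archetypal case $a_n = n$ is reassuring: here $\langle xn\rangle$ is dense in $[0,1)$ precisely for irrational $x$, so $E = \mathbb{Q}$, which is countable and has packing dimension $0$.

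The key structural idea is to exploit Weyl's criterion: $\langle x a_n\rangle$ is equidistributed (and hence dense) in $[0,1)$ provided that for every nonzero integer $k$,
\[
\frac{1}{N}\sum_{n=1}^{N} e^{2\pi i k x a_n} \longrightarrow 0 \qquad (N \to \infty).
\]
I would therefore write $E \subseteq \bigcup_{k \in \mathbb{Z} \setminus \{0\}} E_k$, where $E_k$ consists of those $x$ for which the $k$-th Weyl sum fails to tend to zero. Since packing dimension is countably stable, $\dim_P(E) = \sup_k \dim_P(E_k)$, so it is enough to bound each $\dim_P(E_k)$.

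The heart of the argument is then a quantitative equidistribution estimate for sublacunary sequences. The plan is to use the condition $a_{n+1}/a_n \to 1$ to obtain a van der Corput--type bound on the Weyl sum, which forces $E_k$ to be coverable, at every scale $\delta > 0$, by very few intervals of length $\delta$. This uniform upper box-counting control at all small scales gives the desired bound $\dim_P(E_k) = 0$, which is the strict upgrade needed beyond the classical Hausdorff-dimension conclusion. Summing over $k$ yields $\dim_P(E) = 0 < 1/2$, and Theorem \ref{thm: packing} then produces the required $(1-\epsilon)$-large set $S$ for every $\epsilon \in (0,1]$.

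The main obstacle is the second step: packing dimension is typically larger than Hausdorff dimension, so merely invoking Weyl's ``almost everywhere'' equidistribution theorem is insufficient. One must obtain uniform (not just limsup) control on the exceptional set at all scales. The sublacunary condition provides the needed slack, but translating this into a box-counting estimate requires careful bookkeeping of how the frequencies $\{k x a_n\}$ spread modulo one as $x$ varies; this is where I expect the technical work to lie.
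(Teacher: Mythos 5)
Your reduction is exactly the one the paper uses: deduce the corollary from Theorem \ref{thm: packing} by showing $\dim_P(E)=0$ for sublacunary sequences. The gap is that the entire content of that claim is deferred to a step you only sketch, and the sketched method is doubtful. First, you pass from the density-exceptional set $E$ to the larger equidistribution-exceptional set $\bigcup_{k\neq 0}E_k$ via Weyl's criterion. Density is strictly weaker than equidistribution, so this enlargement is not free: it is not clear (and I would not expect it to be true in general) that the non-equidistribution set of an arbitrary sublacunary sequence has packing dimension zero, even when the non-density set does. Second, the proposed quantitative tool does not obviously exist: a van der Corput bound for $\sum_n e^{2\pi i k x a_n}$ requires usable structure on the shifted differences $a_{n+h}-a_n$, and the hypothesis $a_{n+1}/a_n\to 1$ imposes essentially no such structure (the differences need not be bounded, monotone, or slowly varying). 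So the ``heart of the argument'' is not a bookkeeping issue but the main open step of your plan, and the route through Weyl sums is the wrong tool for it.

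What the paper does instead is invoke a theorem implicit in Boshernitzan's work, which attacks density directly: the sets of $x$ for which $\fract{xa_n}$ eventually avoids a fixed subinterval are ``granular,'' bounded granular sets have zero \emph{upper box} dimension, and $E$ is a countable union of such sets; countable stability of packing dimension (applied after cutting each granular set into bounded pieces $G\cap[n,n+1]$) then gives $\dim_P(E)=0$. Note also that even within your own framework you would need this last localization step: upper box dimension is only meaningful for bounded sets, so any scale-by-scale covering estimate must be proved for $E_k\cap[-M,M]$ and then assembled via countable stability of $\dim_P$ (which, unlike upper box dimension, is countably stable). If you want to complete your argument without citing Boshernitzan, you should replace the Weyl-sum decomposition by the decomposition of $E$ according to which rational subinterval of $[0,1)$ the fractional parts avoid from some index on, and prove the covering estimate for those sets directly; that is essentially reconstructing the granular-set argument.
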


Since every sublacunary sequence is sub-exponential, Corollary \ref{cor:sublacunary-result} has a smaller scope than Theorem \ref{th:kolountzakispapageorgiou23}. Nevertheless, our proof is simpler than the proof of Theorem \ref{th:kolountzakispapageorgiou23}, and, like the aforementioned result of Eigen and Falconer, it involves a deterministic construction. 

In Section~\ref{sec:proof-of-main-result}, we prove that sequences of real numbers whose integer parts form a set $A$ of positive \textit{upper Banach density}, that is, a set $A$ such that
\begin{align*}
    \limsup_{n \to \infty} \max_h \frac{\#(A \cap\{h + 1, h + 2, \ldots, h + n\})}{n} > 0,
\end{align*} 
are not universal in $\mathscr{R}(\epsilon)$ for every $\epsilon > 0$. 

\begin{thm}\label{th:main-result}
Let $\{a_n\}$ be an increasing sequence of real numbers such that the set $A = \{\lfloor a_n \rfloor\} \subset \Z$ has positive upper Banach density.
Then, for each $\epsilon \in (0,1]$, there exists a $(1-\epsilon)$-large set $S \subset \mathbb{R}$ that does not contain any affine copy of $\{a_n\}$.
\end{thm}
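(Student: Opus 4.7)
My plan is to verify the hypothesis of Theorem~\ref{thm: packing}, namely $\dim_P(E) < 1/2$, under the positive upper Banach density assumption, and then invoke that theorem. The starting point is the dense-block structure of $A$: by hypothesis there exist $\delta > 0$, $h_k \in \Z$, and $n_k \to \infty$ with $|A \cap [h_k, h_k + n_k]| \geq \delta n_k$. Consider an irrational $x \in \R$. Restricting attention to those $n$ with $\lfloor a_n\rfloor \in [h_k, h_k + n_k]$, the relevant fractional parts are essentially $\{\fract{xm} : m \in A \cap [h_k, h_k + n_k]\}$, a subset of relative density at least $\delta$ inside $\{\fract{xm} : m \in [h_k, h_k + n_k]\}$. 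Weyl's equidistribution theorem shows that the latter is uniformly distributed modulo $1$, and an inclusion-exclusion argument on dense blocks shows that $\fract{xa_n}$ hits every arc of length $> 1 - \delta$. Promoting this partial statement to full density of $\fract{xa_n}$ in $[0,1)$ for ``most'' $x$ is where the real work lies.

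This is the role of the generalization of Amice--Kahane--Haight announced in the abstract. Classically those theorems bound the exceptional set of $x$ for which $\fract{x b_n}$ fails to equidistribute for a lacunary or polynomial sequence $\{b_n\}$; the variant I would aim for addresses integer sequences whose integer parts have positive upper Banach density, and yields a packing-dimension bound on $E^* := E \cap (\R \setminus \Q)$. Combined with $\Q \subseteq E$ and $\dim_P \Q = 0$, this gives $\dim_P(E) < 1/2$, and Theorem~\ref{thm: packing} then delivers the desired $(1-\epsilon)$-large set $S$.

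The main obstacle is precisely this metric-number-theoretic bound. The classical Riesz-product argument of Kahane and the Cantor-construction argument of Haight are built for lacunary sequences and do not adapt to the Banach-density setting. A new argument is needed, perhaps Fourier-analytic on each dense block $[h_k, h_k + n_k]$ followed by a covering or potential-theoretic argument to bound the packing dimension of the bad set of $x$. The most delicate case is that of ``resonant'' $x$ whose arithmetic is algebraically entangled with the internal structure of $A$---for instance when $A$ resembles a Bohr set whose generator is commensurate to $x$---where one must verify that such $x$ cannot accumulate in a set of packing dimension $\geq 1/2$.
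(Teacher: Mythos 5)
Your proposal has the right overall architecture---reduce the theorem to a metric statement about the exceptional set $E(\{a_n\})$ and then invoke the machinery of Sections \ref{sec:strategy}--\ref{sec:sublacunary}---but it stops exactly where the proof begins. The entire content of the argument for this theorem is the generalization of Amice--Kahane--Haight, and you explicitly defer it (``the main obstacle is precisely this metric-number-theoretic bound\dots\ a new argument is needed, perhaps Fourier-analytic''). Without that step you have no bound on $E$ at all, so there is nothing to feed into Theorem~\ref{thm: packing}, and the proof is incomplete. Your preliminary observation---that for irrational $x$ the points $\fract{xa_n}$ meet every arc of length greater than $1-\delta$---is plausible but far from density; it also glosses over the fact that $xa_n$ and $x\lfloor a_n\rfloor$ differ by $x\langle a_n\rangle$, which is not controlled modulo $1$. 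This is why the actual proof works with the thickened set $A'=\bigcup_n [a_n,a_n+\eta]$ and a measure-theoretic formulation rather than with $\fract{xm}$ for $m\in A$.

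For the record, the completed argument differs from your proposed route in two respects. First, one proves the much stronger statement that $E$ is countable (Theorem~\ref{thm:Banach} and Corollary~\ref{cor:Banach}), not merely that $\dim_P(E)<1/2$; consequently only the elementary Lemma~\ref{lem: construction} is needed ($EE^{-1}$ is countable, hence not all of $\R$), and the packing-dimension theorem plays no role here. Second, the proof of Theorem~\ref{thm:Banach} is neither Fourier-analytic nor potential-theoretic: assuming the exceptional set $X$ is uncountable, one finds uncountably many exceptional dilates in a short multiplicative window $[\rho^k,\rho^{k+1}]$, extracts a sequence $\lambda_i$ with pairwise irrational ratios, sets $\mu_i=1/\lambda_i$ (so that $\sum_i 1/\mu_i=\infty$), and shows via the equidistribution of $\fract{\gamma n}$ that the sets $\mu_i\Z+[0,\epsilon]$ are asymptotically pairwise independent on the dense blocks $[h_m,h_m+y_m]$ (Lemma~\ref{lem: alphabeta}). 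A second-moment argument via the Chung--Erd\H{o}s inequality (Lemma~\ref{lem: nonempty}) then forces finitely many of these sets to cover all but a $\delta$-fraction of a block, so their union must meet $A'$, contradicting the assumption that every $\lambda_i$ is exceptional. This is precisely the mechanism that handles the ``resonant'' dilates you flag as the delicate case, and none of it is present in your proposal.
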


Notice that if a sequence $\{a_n\}$ is $1$-separated (that is, $a_{n+1}-a_n \geq 1$ for all $n \in \N$), then the upper Beurling density of $\{a_n\}$ agrees with the upper Banach density of $\{\lfloor a_n \rfloor\}$. We could therefore also have stated the above result in terms of upper Beurling density. Note also that since the above statement depends only on the set $A$ of integer parts of the sequence, and not on the sequence $\{a_n\}$, itself, the theorem is not sensitive to small perturbations of $\{a_n\}$. For example, if $\{b_n\}$ is any bounded sequence, then the conclusion holds for $\{a_n + b_n\}$. More generally, we may replace $\{b_n\}$ by any sequence that leaves the upper Banach density of the set $A$ unchanged.

Using Theorem \ref{th:main-result}, one can find examples of sequences of \textit{arbitrarily rapid growth} that are non-universal in $\mathscr{R}(\epsilon)$ for every $\epsilon \in (0,1]$. 

\begin{ex}\label{ex:superexp}
Let $\{a_n\}$ be the sequence obtained by arranging the elements of the set $\{f(i)+j: 1 \leq j \leq i\}$ in increasing order, where $f: \mathbb{N} \to \mathbb{R}$ is an increasing function. For any such sequence, $\{\lfloor a_n \rfloor\}$ has upper Banach density $1$. Therefore, Theorem \ref{th:main-result} implies that $\{a_n\}$ is not universal in $\mathscr{R}(\epsilon)$ for every $\epsilon \in (0,1]$. Furthermore, by choosing $f$ to grow sufficiently quickly, we can make $\{a_n\}$ grow as fast as we wish.
\end{ex}

We stress that since the sequences $\{a_n\}$ in Example~\ref{ex:superexp} can have super-exponential growth, they do not satisfy the hypotheses of Theorem \ref{th:kolountzakispapageorgiou23} (namely, $\log a_n = o(n)$), and therefore their non-universality was not previously known. However, notice that Theorem \ref{th:main-result} does not imply Theorem \ref{th:kolountzakispapageorgiou23} since there exist sub-exponential sequences of integers with zero Banach density.

Theorem \ref{th:main-result} allows us to handle sequences of arbitrarily rapid growth with deterministic constructions. The requirement that the integer parts of these sequences have positive upper Banach density forces them to contain arbitrarily long ``linear'' sections. While it is tempting to weaken this assumption by adapting the probabilistic construction used by Kolountzakis \cite[Theorem 3]{Kolountzakis97}, it is not clear whether the techniques used in \cite{Kolountzakis97, KolountzakisPapageorgiou23} extend to our large setting.


To prove Theorem \ref{th:main-result}, we give an explicit construction of a set $S$ with the required properties. The key step in our argument is the following theorem, which is of independent interest.

\begin{thm}\label{thm:Banach}
Let $\{a_n\}$ be a strictly increasing sequence of real numbers such that the set of integers $\{\lfloor a_n \rfloor\}$ has positive upper Banach density. Then, for any $\delta>0$ and any sequence of intervals $\{\Delta_n\}$  with length $\delta$, the set $$X=\{x \in \R: a_nx \not \in \Delta_n\!\!\pmod 1 \text{ for all } n \in \N \}$$ is countable.
\end{thm}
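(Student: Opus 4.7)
My plan is to combine the positive upper Banach density hypothesis with a metric-number-theoretic rigidity statement---a generalization of the Amice--Kahane--Haight circle of theorems---to conclude countability. Let $\alpha > 0$ denote the upper Banach density of $A = \{\lfloor a_n\rfloor\}$, and fix witness parameters $M_k \in \Z$ and $L_k \to \infty$ with $|A \cap [M_k+1, M_k+L_k]| \ge \alpha L_k$. For each $k$, let $B_k \subset \N$ index one representative per distinct integer part in the $k$-th window, so that $|B_k| \ge \alpha L_k$. It suffices to show $X \cap J$ is countable for each bounded interval $J \subset \R$.

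The Lebesgue measure of $X \cap J$ would be controlled by a standard second-moment argument applied to the counting function $F_k(x) = \sum_{n \in B_k} \mathbf{1}_{\Delta_n}(\langle a_n x\rangle)$. The first-moment computation gives $\int_J F_k(x)\,dx = \delta |J| |B_k| + O(1)$, while for the second moment I would Fourier-expand the indicators and reduce the task to controlling exponential sums $\sum_{n_1 \ne n_2 \in B_k} e^{2\pi i \ell (a_{n_1} - a_{n_2}) x}$, which are kept in check by the strict increasingness of $\{a_n\}$ together with the distinctness of integer parts within $B_k$. A Cauchy--Schwarz argument then forces $|\{x \in J : F_k(x) = 0\}| \to 0$ as $k \to \infty$, so $|X \cap J| = 0$.

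The decisive step---and the main obstacle---is upgrading this measure-zero statement to countability, which a pure moment argument cannot deliver. For this, I would appeal to a suitable Haight-type rigidity statement proved separately in the paper: if $x_0 \in X$ were an accumulation point, a sequence $x_k \to x_0$ in $X$ with $y_k = x_k - x_0 \to 0$ would force $\{\langle a_n y_k\rangle\}$ to satisfy a translated family of $\delta$-avoidance constraints (the $\Delta_n$ shifted by $\langle a_n x_0\rangle$). A metric-number-theoretic result---that the set of such $y$ is countable and has no accumulation point at $0$---would then preclude this, showing $X$ is scattered and hence countable. The existing Amice--Kahane--Haight theorems handle the lacunary regime $a_{n+1}/a_n \to 1$; the new contribution would be to generalize to the Banach density regime, where one must instead exploit the long near-arithmetic stretches inside each density-$\alpha$ window $[M_k+1, M_k+L_k]$, perhaps via a quantitative Szemerédi or Varnavides-style input, to replace the ratio control at the heart of the classical proofs.
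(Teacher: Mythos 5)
There is a genuine gap at exactly the point you flag as ``the decisive step'': your plan for upgrading $|X\cap J|=0$ to countability is circular. The ``Haight-type rigidity statement'' you propose to invoke---that the set of $y$ avoiding a translated family of $\delta$-intervals is countable---\emph{is} Theorem~\ref{thm:Banach} itself (the theorem already allows an arbitrary sequence of intervals $\Delta_n$, so shifting by $\langle a_n x_0\rangle$ produces another instance of the same statement, not a weaker one). Moreover, even granting that rigidity statement, your accumulation-point argument does not close: a countable set can perfectly well accumulate at $0$ (e.g.\ $\{1/k\}$), so you would additionally need ``no accumulation point at $0$,'' which you assert but do not derive; and a set with no accumulation points is a strictly stronger conclusion than countability, so this cannot be the right intermediate target. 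The first half of your plan (second moment in $x$ over $J$ to get $|X\cap J|=0$) is plausible with care about small gaps $a_{n_1}-a_{n_2}$ and the Fourier truncation, but it is not the bottleneck and the paper does not need it.

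The paper's actual mechanism avoids the measure-zero detour by dualizing the roles of $x$ and $a_n$. Assume $X$ is uncountable; then some interval $[\rho^k,\rho^{k+1}]$ with $\rho=1+\delta/2$ contains uncountably many points of $X$, from which one extracts infinitely many $\lambda_1,\lambda_2,\dots$ with pairwise irrational ratios. Writing $\mu_i=1/\lambda_i$, the avoidance condition $a_n\lambda_i\notin\Delta_n\pmod 1$ becomes the statement that a slightly perturbed and thickened copy $A'$ of $\{a_n\}$ is disjoint from every progression-neighborhood $\mu_i\Z+[0,\epsilon]$ (the uniform width $\epsilon\asymp\delta/\rho^{k+1}$ survives because all $\lambda_i$ lie in one multiplicative window of ratio $\rho$). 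Lemma~\ref{lem: alphabeta} gives approximate pairwise independence of these progressions inside the Banach-density windows $[h_m,h_m+y_m]$, and the Chung--Erd\H{o}s inequality then shows that finitely many of them already cover a proportion $>1-\delta$ of such a window, while the positive upper Banach density forces $A'$ to occupy a proportion $>\delta$ of it---a contradiction. The second-moment method thus acts on the family $\{\mu_i\Z+[0,\epsilon]\}$ measured along the integer windows, not on a counting function of $x$; and the passage from ``uncountable'' to ``infinitely many pairwise rationally independent dilates bounded away from $0$ and $\infty$'' is what replaces, and renders unnecessary, any Szemer\'edi- or Varnavides-type input.
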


Theorem \ref{thm:Banach} generalizes results of Amice \cite{A64}, Kahane \cite{K64}, and Haight \cite{H88}. Amice and Kahane independently proved Theorem \ref{thm:Banach} assuming that $\{a_n\} \subset \Z$ and $\Delta_1 =\Delta_n$ for each $n$. With these stronger assumptions, they obtained the stronger conclusion that $X \cap [0,1]$ is finite. See also \cite{Kaufman68, Baker11} for higher-dimensional and quantitative versions of Amice and Kahane's result. Haight proved Theorem \ref{thm:Banach} under the stronger assumption that the set $\{\lfloor a_n \rfloor \}$ has positive upper asymptotic density. Our proof of Theorem \ref{thm:Banach} combines various ideas from analysis, number theory, and probability. We suspect that, with some extra work, our proof techniques could be extended to prove higher dimensional generalizations of Theorem~\ref{th:main-result} and Theorem~\ref{thm:Banach}.

In Section \ref{sec:exponential}, we investigate exponential sequences $\{b^n\}$, where $b>1$ is an algebraic number. We prove that such sequences are not universal in $\mathscr{R}(\epsilon)$ for all $\epsilon \in (1 - 1/\ell(b), 1]$, where $\ell(b)$ is the \textit{reduced length} of $b$ (see Section \ref{sec:exponential} for the definition). In the special case that $b \geq 2$ is an integer, we prove that $\{b^n\}$ is not universal in $\mathscr{R}(\epsilon)$ for every $\epsilon \in ((b-1)/b, 1]$.

\begin{thm}\label{thm:2n}
Let $b>1$ be an algebraic number. For each $\epsilon \in (1-1/\ell(b),1]$, there exists a $(1-\epsilon)$-large set $S \subset \mathbb{R}$ that does not contain any affine copy of $\{b^n\}$.
\end{thm}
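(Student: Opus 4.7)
The plan is to construct the set $S$ explicitly, using the polynomial identity encoded by the reduced length $\ell(b)$. For concreteness, I describe the integer case $b\in\Z_{\geq 2}$, where the choice $P(z)=z-b$ realizes $\ell(b)=b$ and the threshold becomes $\epsilon>(b-1)/b$; the general algebraic case is handled analogously by replacing $P(z)=z-b$ with the minimizing polynomial $P\in\Z[z]$ satisfying $P(b)=0$ and using the linear relation $\sum_i c_i y_{n+i}=tP(1)$ that this imposes on any affine copy $y_n=xb^n+t$.

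First I would test the naive $1$-periodic construction $S_0=\bigcup_{N\in\Z}[N+\xi,N+\xi+(1-\epsilon))$ for a fixed $\xi\in[0,\epsilon]$, which automatically satisfies $|S_0\cap I|=1-\epsilon$ for every unit interval but fails to avoid the affine copy $(b^n+\xi)_{n\geq 0}$. The fix is to let the window shift with $N$: take $S=\bigcup_{N\in\Z}[N+\xi_N,N+\xi_N+(1-\epsilon))$ with $\{\xi_N\}\subset[0,\epsilon]$ satisfying the slow-variation condition $\xi_{N+1}\in[0,\xi_N]\cup[\xi_N+1-\epsilon,\epsilon]$ for every $N$; a direct case analysis shows this suffices to maintain $|S\cap I|\geq 1-\epsilon$ on all unit intervals $I$, not just the integer-aligned ones. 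For an affine copy $y_n=xb^n+t$ with $N_n=\lfloor y_n\rfloor$ and $f_n=\langle y_n\rangle$, the copy sits in $S$ iff $\xi_{N_n}\in(f_n-(1-\epsilon),f_n]\cap[0,\epsilon]$ for every $n$; this ``forbidden interval'' has length at most $1-\epsilon<1/b\leq\epsilon$, leaving slack $2\epsilon-1>0$ inside $[0,\epsilon]$ for $\xi_{N_n}$ to escape.

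The verification splits into two cases. For a generic pair $(x,t)$, the modular dynamics $f_{n+1}=\langle bf_n+(1-b)t\rangle$ has dense orbit in $[0,1)$ by standard equidistribution for the base-$b$ map, and escape is automatic. The delicate case is the \emph{trapped} orbits where $(f_n)$ is eventually constant; by the identity $y_{n+1}-by_n=(1-b)t$, this is exactly the locus $x(b-1)\in\Z$. For these trapped orbits, escape requires the subsequence $(\xi_{N_n})_n$ to exit the forbidden interval for some $n$, despite $N_n=\lfloor xb^n+t\rfloor$ growing exponentially. My proposed choice is $\xi_N=\epsilon\cdot\langle N\alpha\rangle$ for an irrational $\alpha\in[(1-\epsilon)/\epsilon,1)$, which automatically satisfies the slow-variation condition by virtue of $\alpha\geq(1-\epsilon)/\epsilon$; additionally selecting $\alpha$ so that $k\alpha/(b-1)$ is normal to base $b$ for every nonzero integer $k$ (a countable full-measure condition on $\alpha$) ensures $(\xi_{N_n})$ is equidistributed in $[0,\epsilon]$ along every trapped subsequence, giving the required escape.

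The main obstacle I anticipate is handling the uncountable trapped family $\{(x,t):x(b-1)\in\Z\}$ uniformly with a single choice of $\alpha$: each such pair produces its own exponentially sparse subsequence $(N_n)$, and $\alpha$ must deliver equidistribution of $(\xi_{N_n})$ for all of them simultaneously---the key diophantine input, which should follow from a Borel-type countable-intersection argument for normal numbers. The hypothesis $\epsilon>1-1/\ell(b)$ is sharp, being exactly the condition for the length-$(1-\epsilon)$ forbidden interval to fit strictly inside the length-$\epsilon$ range of available $\xi_N$-values; at $\epsilon=1-1/\ell(b)$ the slack $2\epsilon-1$ collapses and the escape strategy fails. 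For a general algebraic $b>1$, one replaces the first-order recurrence by the higher-order relation $\sum_ic_iy_{n+i}=tP(1)$ coming from the minimizing polynomial $P$, subdivides each unit interval into $\ell(b)$ sub-intervals, and runs the same escape argument on the resulting modular dynamics.
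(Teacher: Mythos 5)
The central step of your argument rests on a false dichotomy. You split the pairs $(x,t)$ into ``generic'' ones, for which the orbit $f_n=\langle xb^n+t\rangle$ is dense in $[0,1)$, and ``trapped'' ones, for which $(f_n)$ is eventually constant. But for a lacunary sequence such as $\{b^n\}$ the set of dilations $x$ for which $\langle xb^n\rangle$ is \emph{not} dense is uncountable and has full Hausdorff dimension (de Mathan, Pollington, cited in the paper); already for $b=2$ there is an uncountable Cantor set of $x$ whose orbits stay forever in, say, $[0,1/2-\delta]$ without ever becoming constant. None of these pairs is covered by either branch of your case analysis, and they are exactly the dangerous ones: to rule out such an orbit sitting inside your moving window $[\xi_N,\xi_N+1-\epsilon)$ you need a quantitative lower bound on how spread out the orbit must be, not just the (false) assertion that non-dense orbits are eventually constant. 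That lower bound is precisely the Diophantine input the paper imports: Dubickas's theorem says that for $\alpha\notin\Q(b)$ the closure of $\{\langle\alpha b^n+t\rangle\}$ has diameter at least $1/\ell(b)$, so a \emph{fixed} window of length $<1/\ell(b)$ already defeats every such $\alpha$; the remaining dilations form the countable set $\Q(b)\setminus\{0\}$ and are handled by a separate construction (Proposition~\ref{prop:countable}) that deletes a small neighbourhood of one term of each of those countably many affine copies. Your proposal contains no substitute for this Diophantine ingredient, and without it the construction fails for the uncountable intermediate family.

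Two further gaps. Even within your ``trapped'' branch, the family of subsequences $(N_n)$ is uncountable: for a fixed $x$ with $x(b-1)\in\Z$, varying $t$ over an interval produces uncountably many distinct sequences $\lfloor xb^n+t\rfloor$, so the countable-intersection/normality argument you invoke does not obviously cover all of them simultaneously --- you flag this as the main obstacle but do not resolve it. And the reduction of the general algebraic case to ``the same escape argument on the resulting modular dynamics'' is not a proof: for non-integer $b$ there is no base-$b$ map, and controlling $\langle\alpha b^n\rangle$ there is exactly the hard content of Dubickas's work. Finally, the paper explicitly states that it is unknown whether the range of $\epsilon$ can be enlarged, so your sharpness claim should be read only as sharpness of your particular window construction.
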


Our proof of Theorem \ref{thm:2n} is inspired by results from Diophantine approximation by Dubickas \cite{D06, D06b} and uses certain special properties of the sequence $\{b^{n}\}$. We do not know if the range of $\epsilon$ can be expanded. 

Proving the non-universality of the sequence $\{2^{-n}\}$ in $\mathscr{E}$ has long been an impediment to resolving the Erd\H{o}s similarity problem. Similarly, it is not known whether the sequence $\{2^{n}\}$ is universal in $\mathscr{R(\epsilon)}$ for every $\epsilon \in (0,1]$: Theorem \ref{th:kolountzakispapageorgiou23} and Theorem \ref{th:main-result} do not apply to this sequence. In fact, even for a fixed $\epsilon$ close to $1$, neither theorem is applicable. The above theorem allows us to make progress towards this weaker question: the special case $b = 2$ yields that the sequence $\{2^n\}$ is not universal in $\mathscr{R}(\epsilon)$ for every $\epsilon \in (1/2, 1]$.

\begin{cor}
For each $\epsilon \in (1/2,1]$, there exists a $(1-\epsilon)$-large set $S \subset \mathbb{R}$ that does not contain any affine copy of $\{2^n\}$. 
\end{cor}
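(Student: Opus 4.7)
The plan is to derive this corollary as an immediate specialization of Theorem~\ref{thm:2n} to the case $b = 2$. Setting $b = 2$ in that theorem yields, for every $\epsilon \in (1 - 1/\ell(2),\, 1]$, a $(1-\epsilon)$-large set $S \subset \R$ that contains no affine copy of $\{2^n\}$. The only point to verify is therefore the identity $\ell(2) = 2$, which converts the interval $\bigl(1 - 1/\ell(2),\, 1\bigr]$ into the desired $(1/2,\, 1]$.

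This identity is expected to follow directly from the definition of the reduced length to be given in Section~\ref{sec:exponential}. The introduction already foreshadows that $\ell(b) = b$ for every integer $b \geq 2$ by asserting the range $\bigl((b-1)/b,\, 1\bigr]$ in that case; specializing to $b = 2$ gives precisely what we need. For an integer $b \geq 2$, the identity $\ell(b) = b$ is a standard computation in the Dubickas-style theory of reduced length of algebraic numbers, coming essentially from the minimal polynomial $x - b$ together with a lower bound argument showing that no nonzero integer polynomial vanishing at $b$ can have positive part and negative part both summing (in absolute value) to less than $b$.

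The main, and essentially only, obstacle is this explicit evaluation of $\ell(2)$. Once it is confirmed, the corollary follows by direct substitution from Theorem~\ref{thm:2n}, with no additional argument required; in particular, no new probabilistic or Diophantine input is needed beyond what Theorem~\ref{thm:2n} already supplies.
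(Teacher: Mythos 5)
Your proposal matches the paper's own derivation: the corollary is obtained by specializing Theorem~\ref{thm:2n} to $b=2$ and using $\ell(2)=2$, which the paper gets from the cited fact that $\ell(p/q)=\max\{p,q\}$ for coprime positive integers $p,q$ (so $\ell(2)=2$ with $p=2$, $q=1$). No further argument is needed, and your identification of $\ell(2)=2$ as the only point to check is exactly right.
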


\textbf{Notation.} Throughout this paper, we denote by $\lfloor x \rfloor$ the largest integer that is no greater than the real number $x$. We denote by $\langle x \rangle$ the \textit{fractional part} of a real number $x$, that is, the unique real number in $[0,1)$ such that $\langle x \rangle = x - \lfloor x \rfloor$. As usual,  $\mathbb{R}$, $\mathbb{Q}$, $\mathbb{Z}$, $\mathbb{N}$ denote the real numbers, rational numbers, integers, and natural numbers respectively. For an algebraic number $b$, we write $\mathbb{Q}(b)$ for the smallest field containing both $\mathbb{Q}$ and $b$. For any subset $A$ of $\R$, we denote by $\#A$ the cardinality of $A$, and we write $|A|$ for its Lebesgue measure if $A$ is measurable.

\medskip

\textbf{Outline of the paper.} 
In Section \ref{sec:strategy}, we introduce a key lemma that provides a simple construction of a large set that avoids all affine copies of a given sequence $\{a_n\}$. In Section \ref{sec:sublacunary}, we prove Theorem~\ref{thm: packing}, which furnishes a connection between packing dimension and Question~\ref{q:large-erdos}. We also apply Theorem~\ref{thm: packing} to sublacunary sequences and prove Corollary~\ref{cor:sublacunary-result}. In Section \ref{sec:proof-of-main-result}, we prove 
Theorem~\ref{thm:Banach} and deduce Theorem~\ref{th:main-result}. Finally, in Section \ref{sec:exponential}, we construct large sets that avoid all affine copies of certain exponential sequences and prove Theorem~\ref{thm:2n}.

\section{A tool for constructing large sets}\label{sec:strategy}

Our main tool for constructing large sets is the following lemma, which generalizes \cite[Proposition 3.2]{BKM23}. It furnishes an explicit large set $S$ that does not contain any affine copy of $\{a_n\}$ in terms of the exceptional set $E(\{a_n\})$. 

For any sets $X, Y \subseteq \mathbb{R}$, we write $XY^{-1} = \left\{xy^{-1} : x \in X, y \in Y \setminus \{0\}\right\}$.  

\begin{lem}\label{lem: construction}
    Let $\{a_n\} \subset \mathbb{R}$ and $E = E(\{a_n\})$. If $EE^{-1} \neq \mathbb{R}$, then for each $\epsilon \in (0,1]$, there exists a $(1-\epsilon)$-large set $S \subset \mathbb{R}$ that does not contain any affine copy of $\{a_n\}$.
\end{lem}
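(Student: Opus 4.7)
The plan is to construct $S$ explicitly as the complement of two ``bad'' periodic sets whose periods are $1$ and $1/\lambda$ respectively, where $\lambda \in \mathbb{R} \setminus EE^{-1}$ is a positive element guaranteed by the hypothesis. First I would observe that $E = -E$ (since the density of $\{\langle x a_n \rangle\}$ is preserved under $x \mapsto -x$), so $EE^{-1}$ is closed under negation and a positive $\lambda$ can be chosen; note also that unless $E = \{0\}$, we have $0 \in EE^{-1}$, forcing $\lambda \neq 0$. The key arithmetic consequence of the condition $\lambda \notin EE^{-1}$ is that for every $x \in E \setminus \{0\}$, $\lambda x \notin E$; that is, even if $\{\langle x a_n \rangle\}$ fails to be dense in $[0,1)$, the rescaled orbit $\{\langle \lambda x a_n \rangle\}$ is dense.

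Given $\epsilon \in (0,1]$, I would fix two nonempty open intervals $U_1, U_2 \subseteq [0,1)$ of lengths $\delta_1, \delta_2 > 0$ chosen small enough that $\delta_1 + \delta_2(1 + 1/\lambda) \leq \epsilon$, and define
\begin{align*}
    T_1 = \{y \in \mathbb{R} : \langle y \rangle \in U_1\}, \quad T_2 = \{y \in \mathbb{R} : \langle \lambda y \rangle \in U_2\}, \quad S = \mathbb{R} \setminus (T_1 \cup T_2).
\end{align*}
Since $T_1$ is $1$-periodic with density $\delta_1$, $|T_1 \cap I| = \delta_1$ for every unit interval $I$. The set $T_2 = \lambda^{-1}(U_2 + \mathbb{Z})$ is $1/\lambda$-periodic with density $\delta_2$, and a direct count of the translates of $U_2$ meeting the interval $\lambda I$ (of length $\lambda$) yields $|T_2 \cap I| \leq \delta_2(1 + 1/\lambda)$. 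Combining these two estimates gives $|S \cap I| \geq 1 - \epsilon$, so $S \in \mathscr{R}(\epsilon)$.

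To rule out affine copies, I would fix $x \neq 0$ and $t \in \mathbb{R}$ and split into cases. If $x \notin E$, then $\{\langle xa_n + t\rangle\}$ is dense in $[0,1)$ and meets the nonempty open set $U_1$, giving some $n$ with $xa_n + t \in T_1 \subseteq \mathbb{R} \setminus S$. If instead $x \in E \setminus \{0\}$, then the defining property of $\lambda$ forces $\lambda x \notin E$, so $\{\langle \lambda x a_n + \lambda t \rangle\}$ is dense in $[0,1)$ and meets $U_2$, giving some $xa_n + t \in T_2 \subseteq \mathbb{R} \setminus S$. Either way $xA + t \not\subseteq S$, as required. The degenerate case $E = \{0\}$ needs no separate attention: every nonzero $x$ then lies outside $E$, and the $T_1$ argument alone handles it (the $T_2$ part is applied vacuously).

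There is no serious obstacle here; once one extracts the right $\lambda$ from $EE^{-1} \neq \mathbb{R}$, the construction and verification are routine. The mildly delicate point is the measure estimate for $T_2$ when $\lambda$ is small, but since we only need some positive $\delta_2$ to make $U_2$ nonempty, this only constrains the choice of $\delta_2$ and never prevents a valid pair $(\delta_1, \delta_2)$ from existing.
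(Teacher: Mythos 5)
Your proof is correct and takes essentially the same approach as the paper: both construct $S$ as the complement of the union of a $1$-periodic bad set and a $(1/\lambda)$-periodic bad set for a positive $\lambda \notin EE^{-1}$ (the paper writes this as $T \cap yT$ with $T = \{x : \langle x\rangle \leq 1-\alpha\}$), and both rest on the same algebraic observation that $\lambda \notin EE^{-1}$ forces $\lambda x \notin E$ for every $x \in E \setminus \{0\}$. The only cosmetic difference is that the paper runs the avoidance step as a contradiction by rescaling the putative copy by $y^{-1}$, whereas you argue the dichotomy $x \in E$ versus $x \notin E$ directly.
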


\begin{proof}
    Fix $\epsilon \in (0,1]$ and $y \in \mathbb{R} \setminus (EE^{-1})$ with $y > 1$.  Such a choice is always possible since $E=-E$ and therefore $EE^{-1}$ is symmetric about 0.   
Choose $\alpha, \beta \in (0,1]$ so that $\beta \leq 1 - (1 - \epsilon + y)(1 + y)^{-1}$ and $2\alpha \leq \beta$. Let $T = \{x \in \mathbb{R} : \langle x \rangle \leq 1 - \alpha\}$ and $S = T \cap y T$. 
    
    Fix an arbitrary interval $I \subset \mathbb{R}$ of unit length and choose an integer $m$ so that $I \subset [m, m+2]$. Then 
        \begin{align*}
            \vert I \setminus T \vert &\leq \vert [m,m+2]\setminus T \vert = \vert  [m,m+1] \setminus T \vert + \vert [m+1, m+2] \setminus T \vert = 2\alpha \leq \beta,
        \end{align*}
    which implies that $\vert T \cap I \vert \geq 1 - \beta$. Next, fix arbitrary intervals of unit length $I \subset \mathbb{R}$ and $J \subset \mathbb{R}$ such that $y^{-1}I \subset J$. Then
    \begin{align*}
        \vert I \setminus S \vert &\leq \vert I \setminus T \vert + \vert I \setminus y T \vert = \vert I \setminus T \vert + y \vert (y^{-1} I) \setminus T \vert \leq \vert I \setminus T \vert + y \vert J \setminus T \vert \leq (1 + y) \beta \leq \epsilon,
    \end{align*}
    where the last inequality follows from our choice of $\beta$. This implies that $\vert S \cap I \vert \geq 1 - \epsilon$.

    Observe that $T$ cannot contain any affine copy of $\{a_n\}$ with dilation parameter in $\mathbb{R} \setminus E$; indeed, any such copy of $\{a_n\}$ will have terms with fractional part greater than $1 - \alpha$ since the sequence of fractional parts is dense in $[0,1)$.
    
    Since $S \subseteq T$, $S$ cannot contain any affine copy of $\{a_n\}$ with dilation parameter $y \in \mathbb{R} \setminus E$. Suppose for contradiction that there exist $t \in \mathbb{R}$ and $y' \in E \setminus \{0\}$ such that $t + y' \{a_n\} \subset S$. Then $t + y' \{a_n\} \subset y T$, and $(ty^{-1}) + (y'y^{-1})\{a_n\} \subset T$. This, together with the observation in the above paragraph, implies that $y'y^{-1} \in E$. But now $y'(y'y^{-1})^{-1} =  y$, so $y \in EE^{-1}$. This contradicts our choice of $y$.
\end{proof}

\begin{rem}
    Our construction of the large set $S$ only depends on the exceptional set $E(\{a_n\})$, and not on the sequence itself. For this reason, if a (possibly uncountable) family of sequences have the same exceptional set $E$, and $EE^{-1} \neq \R$, the construction in Lemma \ref{lem: construction} actually yields a \textit{single} large set that does not contain any affine copy of any sequence in this family. 
\end{rem}

\begin{rem}
Burgin, Goldberg, Keleti, MacMahon, and Wang consider a variant of Question \ref{q:large-erdos} where $\mathscr{R}(\epsilon)$ is replaced by $\mathscr{D}$, the collection of measurable sets $S \subseteq \mathbb{R}$ such that $\vert S \cap [m,m+1] \vert \to 1$ as $m \to \infty$. In \cite[Lemma 7]{BGKMW23}, they prove that if for every $j \in \mathbb{N}$ and every $\epsilon >0$ the tail $\{a_n: n \geq j\}$ of the sequence $\{a_n\}$ is not universal in $\mathscr{R}(\epsilon)$, then $\{a_n\}$ is not universal in $\mathscr{D}$. Combining this with Theorem \ref{th:kolountzakispapageorgiou23}, they deduce that sub-exponential sequences are not universal in $\mathscr{D}$. Since removing any finite number of terms from a sequence does not alter its exceptional set, we can apply their lemma to deduce that any sequence that satisfies the assumptions of Lemma \ref{lem: construction} is not universal in $\mathscr{D}$. The proofs of Theorem \ref{thm: packing} and Theorem \ref{thm:Banach} of the present paper make use of Lemma \ref{lem: construction} to establish the non-universality of sequences in $\mathscr{R(\epsilon)}$ for every $\epsilon \in (0,1]$. From the above observation, it follows that these sequences are also not universal in $\mathscr{D}$.
\end{rem}

In general, it is not easy to determine the exceptional set $E$ of a given sequence. In Section \ref{sec:sublacunary} and Section \ref{sec:proof-of-main-result}, we introduce new tools to give partial structural information on $E$.
Below, we list some well-known examples of sequences for which the corresponding exceptional set is known, and therefore Lemma~\ref{lem: construction} applies readily.

\begin{ex}
The exceptional set $E$ of each of the following sequences is countable. Therefore, $EE^{-1} \neq \R$, and Lemma~\ref{lem: construction} is applicable.
\begin{enumerate}[label=(\alph*)]
    \item \textit{Polynomial sequences.} Weyl's equidistribution theorem asserts that if $P$ is a nonconstant polynomial, then the sequence $\{P(n):n \in \N\}$ is uniformly distributed in $[0,1)$ if and only if at least one of the coefficients of $P$, other than the constant term, is irrational. Therefore, if $a_n = Q(n)$, where $Q$ is any nonconstant monic polynomial, we have that $\langle xa_n\rangle$ is not uniformly distributed in $[0,1)$ only if $x \in \Q$, and thus the exceptional set $E$ is contained in $\mathbb{Q}$. Notice that the special case $Q(x) = x$ implies \cite[Corollary 3.1]{BKM23}.
    \item \textit{Polynomial-like sequences supported on the primes.} Let $\{p_n\}$ be the set of prime numbers.  Bergelson, Kolesnik, Madritsch, Son, and Tichy \cite{BKMST} proved that if $\phi$ is a function of the form
    \begin{align}\label{eq:polynomial-like}
        \phi(x)=\sum_{j=1}^m \alpha_j x^{\theta_j},
    \end{align}
    where $0 <\theta_1 <\theta_2< \cdots< \theta_m$, $\alpha_j$ are nonzero real numbers (and if at least one $\alpha_j$ is irrational in the case that all $\theta_j\in \N$), then the sequence $\{\phi(p_n)\}$ is uniformly distributed. 

    Suppose that $a_n = \psi(p_n)$, where $\psi$ is a function of the form \eqref{eq:polynomial-like} with $\alpha_1 = 1$. Then we have that  $\langle xa_n\rangle$ is uniformly distributed in $[0,1)$ only if $x \in \Q$, and thus the exceptional set $E$ is contained in $\mathbb{Q}$.
    \item \textit{Sublacunary multiplicative semigroups.} Let $\{a_n\}$ be a sublacunary increasing sequence of positive integers which forms a \emph{multiplicative semigroup} 
 (closed under multiplication in $\mathbb{N}$). Furstenberg \cite{F67} proved that for any irrational number $x$, the sequence $\fract{x a_n}$ is dense in $[0,1)$. Thus, the exceptional set $E$ is contained in $\mathbb{Q}$. We refer the interested reader to generalizations of Furstenberg's theorem in \cite{K99, K18}.
    \item \textit{Certain sequences in a neighborhood of a sub-exponential sequence.} If $\{b_n\}$ is a sequence of sub-exponential growth and $\{d_n\}$ is any sequence with $d_n \to \infty$, Ajtai, Havas, and Koml\'os \cite{AHK83} showed that there exists a sequence $\{a_n\}$ with $b_n \leq a_n \leq b_n + d_n$ for which the sequence $\{xa_n\}$ is uniformly distributed modulo $1$ for every $x \in \R \setminus \{0\}$. In particular, the exceptional set of this sequence is simply $E = \{0\}$.
\end{enumerate}
\end{ex}


\section{Avoiding affine copies of sublacunary sequences}\label{sec:sublacunary}

In this section, we prove Theorem~\ref{thm: packing} and use it to deduce Corollary~\ref{cor:sublacunary-result}. We require the notions of \textit{Hausdorff dimension}, \textit{packing dimension}, and \textit{upper box dimension} from fractal geometry. 
We refer the reader to \cite{Falconer84} for the definitions. For any set $X \subseteq \mathbb{R}$, we denote these quantities by $\dim_H X, \dim_P X$, and $\overline{\dim_B} X$ respectively. They are related by the following inequalities (see \cite[Equation (3.29)]{Falconer84}):
\begin{align}\label{eq:hausdorff-packing-box}
    \dim_H X \leq \dim_P X \leq \overline{\dim_B} X.
\end{align}
In what follows, we also require three properties of Hausdorff dimension and packing dimension. The first is the inequality 
\begin{align}\label{eq:tricot}
    \dim_H (X \times Y) \leq \dim_H X + \dim_P Y,
\end{align}
proved in \cite[Theorem 3]{Tricot82}, and the second is the fact that the packing dimension is countably stable. This means that for any countable collection of subsets $A_n \subset \mathbb{R}$,
\begin{align*}
    \dim_P \left(\bigcup_{n = 1}^\infty A_n\right) = \sup_n \dim_P (A_n).
\end{align*}
The third property is the following fact, proved in \cite[Proposition 2.3]{Falconer84}. If $f : X \to \mathbb{R}$ is a Lipschitz function, then
\begin{align}\label{eq:lipschitz}
    \dim_H f(X) \leq \dim_H X. 
\end{align}

Theorem~\ref{thm: packing} uses packing dimension to provide a partial answer to Question~\ref{q:large-erdos}. We are now ready to prove it.

\begin{proof}[Proof of Theorem~\ref{thm: packing}]
    By Lemma \ref{lem: construction}, it suffices to show $EE^{-1} \neq \mathbb{R}$. Let $E_1 = E \cap (0, \infty) $ and $E_2 = E \cap (-\infty, 0]$. Observe that
    \begin{align*}
        E_1E_1^{-1} = \bigcup_{n=1}^\infty \bigcup_{m=1}^\infty \frac{E_1 \cap [1/n, m)}{E_1 \cap [1/n, m)}.
    \end{align*}

    Applying in turn the fact that the Hausdorff dimension is countably stable, that the map $f(x,y) = x/y$ is Lipschitz on $(E_1 \cap [1/n, m)) \times (E_1 \cap [1/n,m))$ for every $n,m \in \mathbb{N}$, and inequalities \eqref{eq:hausdorff-packing-box} and \eqref{eq:tricot}, we see that
    \begin{align*}
        \dim_H\left(E_1E_1^{-1}\right) &\leq \sup_{n,m} \dim_H\left(\frac{E_1 \cap [1/n, m)}{E_1 \cap [1/n, m)}\right) \\
         &\leq \sup_{n,m} \dim_H f\left[(E_1 \cap [1/n, m)) \times (E_1 \cap [1/n,m))\right] \\
        &\leq \sup_{n,m} \dim_H \left[(E_1 \cap [1/n, m)) \times (E_1 \cap [1/n,m))\right] \\
        &\leq \dim_H (E_1 \times E_1)\leq 2\dim_P(E_1).
    \end{align*}

    A similar argument shows that the same bound holds for $E_1E_2^{-1}$, $E_2E_1^{-1}$, and $E_2E_2^{-1}$. Also, since $\dim_P(E) < 1/2$, we have $\dim_H(E_iE_j^{-1}) < 1$ for $1 \leq i, j \leq 2$. It follows that $\dim_H(EE^{-1})<1$, and hence $EE^{-1} \neq \mathbb{R}$.
\end{proof}

With the help of Theorem~\ref{thm: packing}, Corollary \ref{cor:sublacunary-result} follows immediately from the next theorem, which is implicit in \cite{Boshernitzan94}.

\begin{externaltheorem}[Boshernitzan]
    Let $\{a_n\}$ be an unbounded increasing sequence of real numbers. If $\lim_{n \to \infty} a_{n+1}/a_n = 1$, then the set $E=E(\{a_n\})$ has zero packing dimension.
\end{externaltheorem}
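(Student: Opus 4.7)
The plan is to decompose $E$ into countably many pieces and apply the countable stability of packing dimension. Since $\{\langle xa_n\rangle\}_n$ fails to be dense in $[0,1)$ exactly when some open interval $J\subset[0,1)$ with rational endpoints is missed entirely by the orbit, we can write $E=\bigcup_J E_J$ with $E_J=\{x\in\R:\langle xa_n\rangle\notin J\ \text{for all}\ n\in\N\}$, and this is a countable union. It therefore suffices to show $\dim_P(E_J)=0$ for each such $J$, so fix one with $|J|=\delta>0$ throughout.

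I would next reduce from packing dimension to upper box dimension. Writing $E_J=\bigcup_{k\in\N}(E_J\cap[-k,k])$, using the inequality $\dim_P X\le\overline{\dim_B}X$ for bounded $X$, and invoking countable stability a second time, the task becomes to show $\overline{\dim_B}(E_J\cap[R,2R])=0$ for each $R>0$. Concretely, this means showing that for every $\epsilon>0$, the covering number satisfies $N(E_J\cap[R,2R],1/a_N)\le a_N^{\epsilon}$ for all $N$ sufficiently large.

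The core combinatorial estimate is to use many of the restrictions $\langle xa_n\rangle\notin J$ with $n\le N$ simultaneously. The strategy is to select a subsequence $n_1<n_2<\cdots<n_m$ of $\{1,\ldots,N\}$ along which the events $\{\langle xa_{n_k}\rangle\in J\}$ behave approximately independently inside each $1/a_N$-interval, so that a Weyl-type equidistribution estimate yields $N(E_J\cap[R,2R],1/a_N)\lesssim Ra_N(1-\delta)^m$ up to lower order terms. Sublacunarity implies $\log a_N=o(N)$, so one can afford to take $m$ with $m\log(1/(1-\delta))\gg\log a_N$, making the right hand side smaller than $a_N^{\epsilon}$. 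Together with the previous reductions this gives $\overline{\dim_B}(E_J\cap[R,2R])\le\epsilon$, and letting $\epsilon\to 0$ finishes the proof.

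The main obstacle is producing a subsequence long enough to beat any polynomial in $a_N$ while retaining sufficient near-independence. A lacunary subsequence with ratios $\ge 1/\delta$ would suffice trivially but in general provides only $O(\log a_N)$ terms, which is not enough after being multiplied by $\log(1/(1-\delta))$ unless $\delta$ is close to $1$. The key technical input, which Boshernitzan supplies in his 1994 paper, is a framework that tracks how the perturbations $x(a_{n_{k+1}}-a_{n_k})\pmod 1$ propagate through the orbit and that quantifies the deviation from independence in terms of the consecutive ratios $a_{n+1}/a_n$; this is precisely what upgrades the classical Hausdorff-dimension-zero statement for exceptional sets of sublacunary sequences to the packing dimension statement needed here.
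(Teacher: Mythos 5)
Your proposal is correct and follows essentially the same route as the paper: both reduce the claim to showing that countably many bounded pieces of $E$ have zero upper box dimension, then conclude via $\dim_P X \le \overline{\dim_B}\,X$ and the countable stability of packing dimension, with the hard covering estimate outsourced to Boshernitzan's 1994 paper. The only difference is presentational --- the paper cites Boshernitzan's specific results (bounded \emph{granular} sets have zero upper box dimension, and $E$ is a countable union of granular sets), whereas you sketch the quasi-independence covering argument that sits inside those propositions and correctly identify it as the technical input you would need to import.
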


\begin{proof}
The main result in \cite{Boshernitzan94} states that $E$ has zero Hausdorff dimension, but the proof of this theorem actually shows something stronger. Indeed, Boshernitzan shows that bounded granular sets (see \cite[Section 2]{Boshernitzan94} for a definition) have zero upper box dimension \cite[Proposition 3.3]{Boshernitzan94} and $E$ is a countable union of granular sets \cite[Proposition 4.1]{Boshernitzan94}. Since any granular set $G$ can be expressed as a countable union of bounded granular sets---namely, $G = \bigcup_{n \in \Z} (G \cap [n, n + 1])$, where the sets $G \cap [n,n+1]$ are granular since subsets of granular sets are granular---it follows from inequality~\eqref{eq:hausdorff-packing-box} that $E$ is a countable union of sets of zero packing dimension. Since the packing dimension is countably stable, we conclude that $E$ has zero packing dimension. 
\end{proof}

\begin{rem}
De Mathan \cite{Mathan80} and Pollington \cite{Pollington79} independently prove that if $\{a_n\}$ is a lacunary sequence, then its exceptional set has full Hausdorff dimension, and hence full packing dimension. Therefore, the above proof does not apply to any lacunary sequence. Note, however, that there exist sequences that are neither lacunary nor sublacunary, so the above result may apply to some of these sequences. In particular, the sequences in Example \ref{ex:superexp} are not sublacunary, but their exceptional sets have zero packing dimension.
\end{rem}


\section{Avoiding affine copies of sequences with positive upper Banach density}\label{sec:proof-of-main-result}
In this section, we prove Theorem \ref{th:main-result} and Theorem \ref{thm:Banach}. We begin by establishing two technical lemmas (Lemma \ref{lem: alphabeta} and Lemma \ref{lem: nonempty}) from which we deduce Theorem \ref{thm:Banach}. Next, we apply Theorem \ref{thm:Banach} to show that if $\{a_n\}$ is a sequence of real numbers whose integer parts form a set of positive upper Banach density, then the exceptional set $E(\{a_n\})$ is countable (Corollary \ref{cor:Banach}). Combining this observation with Lemma~\ref{lem: construction} immediately yields Theorem~\ref{th:main-result}.

\begin{lem}\label{lem: alphabeta}
Assume that $\alpha, \beta, \epsilon>0$ with $\alpha/\beta \not \in \Q$ and $\epsilon<\min(\alpha, \beta)$. For any two positive sequences of real numbers $\{y_m\}$ and $\{h_m\}$ with $y_m \to \infty$ as $m \to \infty$, we have that
\begin{align}\label{eq:lemma-41-eq}
\bigg|(\alpha\Z+[0,\epsilon]) \cap (\beta\Z+[0,\epsilon]) \cap [h_m, h_m+y_m]
\bigg|=\frac{\epsilon^2y_m}{\alpha \beta} +o(y_m) \qquad \text{as $m \to \infty$.}
\end{align}
\end{lem}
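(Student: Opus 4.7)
The plan is to recast the left-hand side of \eqref{eq:lemma-41-eq} as a Birkhoff average for a linear flow on the two-dimensional torus $\mathbb{T}^2 = (\mathbb{R}/\mathbb{Z})^2$. The condition $x \in \alpha\mathbb{Z}+[0,\epsilon]$ is equivalent to $\{x/\alpha\} \in [0,\epsilon/\alpha]$ (and the inequality $\epsilon < \alpha$ ensures that this is a proper subinterval of $[0,1)$), and similarly for $\beta$. After the substitution $x = h_m + t$, the quantity in question equals
\[
\int_0^{y_m} \mathbf{1}_R\bigl(\varphi_t(u_m,v_m)\bigr)\, dt,
\]
where $R = [0,\epsilon/\alpha] \times [0,\epsilon/\beta] \subset \mathbb{T}^2$, the map $\varphi_t(u,v) = (u + t/\alpha,\, v + t/\beta) \pmod 1$ is the linear flow in the direction $(1/\alpha,\,1/\beta)$, and the starting point is $(u_m, v_m) = (h_m/\alpha,\, h_m/\beta) \pmod 1$.

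The key fact is that the hypothesis $\alpha/\beta \notin \mathbb{Q}$ is exactly what is needed for every orbit of $\varphi_t$ to be dense in $\mathbb{T}^2$. Consequently, Lebesgue measure is the unique $\varphi_t$-invariant Borel probability measure, that is, $\varphi_t$ is uniquely ergodic. By the standard consequence of unique ergodicity, for every continuous $f\colon \mathbb{T}^2 \to \mathbb{R}$,
\[
\sup_{(u_0,v_0) \in \mathbb{T}^2}\,\Biggl|\frac{1}{T}\int_0^T f(\varphi_t(u_0,v_0))\,dt \;-\; \int_{\mathbb{T}^2} f\Biggr| \longrightarrow 0 \quad \text{as } T \to \infty.
\]
Applied (via a suitable approximation) to the indicator $\mathbf{1}_R$ with $T = y_m$, this will give the desired asymptotic with main term $|R|\cdot y_m = \epsilon^2 y_m/(\alpha\beta)$.

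The main (and essentially only) obstacle is twofold: $\mathbf{1}_R$ is not continuous, and the starting point $(u_m,v_m)$ depends on $m$ in an arbitrary way, so the estimate \emph{must} be uniform in the starting point. Uniformity is precisely the content of unique ergodicity, so no extra work is required on that front. To handle the discontinuity, I will sandwich $\mathbf{1}_R$ between two continuous functions $g^{-}_{\eta} \leq \mathbf{1}_R \leq g^{+}_{\eta}$ on $\mathbb{T}^2$ with $\int_{\mathbb{T}^2}(g^{+}_{\eta} - g^{-}_{\eta}) < \eta$; this is possible because the boundary of $R$ has Lebesgue measure zero. Applying the uniform convergence above to $g^{\pm}_{\eta}$, sandwiching, dividing by $y_m$, letting $m \to \infty$, and finally letting $\eta \to 0$ yields \eqref{eq:lemma-41-eq}.
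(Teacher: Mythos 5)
Your proof is correct, but it takes a genuinely different route from the paper's. The paper works entirely in one dimension: after rescaling by $\beta$ it slices the set by the value of the fractional part (a Fubini-type decomposition), reduces each slice to counting integers $n \leq N_m$ with $\gamma n$ in a prescribed interval mod $1$, where $\gamma = \alpha/\beta$, and invokes the fact that the discrepancy $D_N$ of $\{\fract{\gamma n}\}$ tends to zero; the uniformity in $h_m$ is supplied by the supremum over intervals in the definition of discrepancy, and the boundary effects are absorbed into explicit $O(1)$ corrections. You instead lift the problem to the two-torus and recognize the quantity as a time average of $\mathbf{1}_R$ along the linear flow in direction $(1/\alpha,1/\beta)$, with the $m$-dependent starting point handled by the uniform-in-initial-condition convergence that characterizes unique ergodicity, and the discontinuity of $\mathbf{1}_R$ handled by a standard sandwich (valid since $\partial R$ is Lebesgue-null). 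Your approach is shorter and more conceptual, and it makes transparent why the estimate is uniform in $h_m$; the paper's approach is more elementary and, with a quantitative discrepancy bound in place of $D_N \to 0$, would yield an explicit error term, which the ergodic argument does not. Both ultimately rest on Weyl equidistribution. One small point to tighten: you deduce unique ergodicity from density of orbits, but minimality does not imply unique ergodicity in general. For this specific flow the implication is harmless because unique ergodicity of irrational linear flows on $\mathbb{T}^2$ is classical; alternatively, verify it directly by Fourier analysis, noting that $k_1/\alpha + k_2/\beta \neq 0$ for every nonzero $(k_1,k_2) \in \Z^2$ precisely because $\alpha/\beta \notin \Q$, which forces every invariant probability measure to have vanishing nonzero Fourier coefficients and hence to equal Lebesgue measure.
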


\begin{proof}
We may assume without loss of generality that $\alpha > \beta$. Let $\gamma=\alpha/\beta$. Since $\gamma \not \in \Q$, the sequence $\{\gamma n\}_{n=1}^{\infty}$ is uniformly distributed modulo $1$. Thus, by \cite[Chapter 2, Theorem 1.1]{KN74}, the discrepancy $D_N$ of the sequence $\{\gamma n\}_{n=1}^{\infty}$ tends to zero. This means that the quantity
$$
 D_{N}:=\sup_{I} \bigg|\frac{\#\{n \leq N: \gamma n \in I \pmod 1\}}{N}-|I| \bigg|,
$$
where the supremum is taken over all intervals $I$ of the torus, satisfies $D_{N} \to 0$ as $N \to \infty$.
In particular, for each interval $I$ of the torus and each real number $N>0$, we have
\begin{equation}\label{eq: IN}
\bigg|\#\{n \leq N: \gamma n \in I \pmod 1\}-|I|N\bigg|\leq ND_N.
\end{equation}    

For convenience, let us write
\begin{align*}
    A = \gamma\Z+\bigg[0,\frac{\epsilon}{\beta}\bigg], \quad B = \Z+\bigg[0,\frac{\epsilon}{\beta}\bigg], \quad \text{and} \quad C_m = \bigg[\frac{h_m}{\beta}, \frac{h_m+y_m}{\beta}\bigg].
\end{align*}
To prove equation \eqref{eq:lemma-41-eq} it suffices to show that
\begin{align*}
|A \cap B \cap C_m|=\frac{\epsilon^2y_m}{\alpha \beta^2} +o(y_m) \qquad \text{as $m \to \infty$}.
\end{align*}

We first rewrite the left-hand side of this equation as an integral:
\begin{align}
|A \cap B \cap C_m|
&=\int_{\R} \mathbf{1}_{A} (t) \mathbf{1}_{B}(t) \mathbf{1}_{C_m} (t) \,dt \notag \\
&=\sum_{k \in \Z} \int_{k}^{k+\epsilon/\beta} \mathbf{1}_{A} (t)  \mathbf{1}_{C_m} (t) \,dt \notag \\
&=\sum_{k \in \Z} \int_{0}^{\epsilon/\beta} \mathbf{1}_{A} (t+k) \mathbf{1}_{C_m} (t+k) \,dt \notag \\
&=\int_{0}^{\epsilon/\beta} \bigg(\sum_{k \in \Z}  \mathbf{1}_{A} (t+k)  \mathbf{1}_{C_m} (t+k)\bigg) \,dt \label{eq:tonelli} \\
&=\int_{0}^{\epsilon/\beta} \#\{x \in A \cap C_m: \fract{x}=t\} \,dt. \label{eq:integral-form}
\end{align}
Note that each $C_m$ is bounded and can therefore only contain finitely many terms $t + k$ for each $t$. Thus, the sum in equation \eqref{eq:tonelli} has finitely many nonzero terms, and the interchange of the integral and sum is justified.

Fix $t \in [0, \epsilon/\beta]$. We wish to estimate $\#\{x \in A \cap C_m: \fract{x}=t\}$. To this end, define
\begin{align*}
U_{m,t}=\bigg\{n \in \Z: \fract{\gamma n+z}=t, \frac{h_m}{\beta}\leq \gamma n+z \leq \frac{h_m+y_m}{\beta} \text{ for some } 0 \leq z \leq \frac{\epsilon}{\beta}\bigg\}.
\end{align*}
Since $\epsilon<\beta$, we have
\begin{align*}
    \# \{x \in A \cap C_m : \langle x \rangle = t\} &= \sum_{n \in \Z} \#\left\{x \in \left[\gamma n, \gamma n + \frac{\epsilon}{\beta}\right] \cap C_m : \langle x \rangle = t \right\}.
\end{align*}
Observe that
\begin{align*}
    \#\left\{x \in \left[\gamma n, \gamma n + \frac{\epsilon}{\beta}\right] \cap C_m : \langle x \rangle = t \right\} = \begin{cases}
        1 & \text{ if } n \in U_{m,t} \\
        0 & \text{ if } n \not\in U_{m,t}.
    \end{cases}
\end{align*}
Therefore,
\begin{align}\label{eq:discretize}
    \#\{x \in A \cap C_m: \fract{x}=t\}=\# U_{m,t}.
\end{align} 

To estimate $\# U_{m,t}$, we introduce a new set
\begin{align*}
V_{m,t}=\bigg\{n \in \Z: \fract{\gamma n+z}=t, \frac{h_m}{\beta}\leq \gamma n \leq \frac{h_m+y_m}{\beta} \text{ for some } 0 \leq z \leq \frac{\epsilon}{\beta}\bigg\}.
\end{align*}

We claim that $|\#U_{m,t}-\#V_{m,t}|\leq 1$. To see this, observe that if $n \in U_{m,t}\setminus V_{m,t}$ then $\gamma n < h_m / \beta \leq \gamma n + (\epsilon/\beta)$. Since $\gamma>1>\epsilon/\beta$ the intervals $[\gamma n, \gamma n + (\epsilon/\beta)]$ are disjoint for every $n$ and it follows that $\#(U_{m,t}\setminus V_{m,t})\leq 1$. A similar argument shows that $\#(V_{m,t} \setminus U_{m,t})\leq 1$. It follows that
$\vert \#U_{m,t} - \#V_{m,t} \vert=\vert \#(U_{m,t} \setminus V_{m,t}) - \#(V_{m,t} \setminus U_{m,t}) \vert\leq 1$.


This, together with the definition of $V_{m,t}$, shows that
\begin{align*}
\#U_{m,t}
=\#\bigg\{n \in \Z: \gamma n \in \bigg[t-\frac{\epsilon}{\beta},t\bigg] \pmod 1, \frac{h_m}{\beta}\leq \gamma n \leq \frac{h_m+y_m}{\beta}\bigg\}+O(1),
\end{align*}
where the implicit constant is independent of $m$ and $t$.

Let $n'=n'(m)$ be the smallest integer such that $\gamma n' \geq h_m/\beta$. Replacing $n$ with $n+n'$ in the above equation, we find that
\begin{align*}
\# U_{m,t} &=\#\bigg\{0 \leq n \leq \frac{y_m}{\beta \gamma}: \gamma n \in \bigg[t-\frac{\epsilon}{\beta}-\gamma n',t-\gamma n'\bigg] \pmod 1\bigg\}+O(1).
\end{align*}
Applying equation~\eqref{eq: IN} with $N_m=y_m/\beta \gamma$, we get
\begin{align}\label{eq:estimate-Umt}
\# U_{m,t} =\frac{\epsilon}{\beta} \cdot N_m +O(N_mD_{N_m}+1) = \frac{\epsilon}{\beta} \cdot N_m +o(N_m).
\end{align}
Note that the implicit constant in $o(N_m)$ is independent of $m$ and $t$.

We conclude that
\begin{align*}
\vert A \cap B \cap C_m\vert &= \int_{0}^{\epsilon/\beta} \#\{x \in A \cap C_m: \fract{x}=t\} \,dt && \quad \text{(by equation \eqref{eq:integral-form})}\\
&=\int_{0}^{\epsilon/\beta} \bigg(\frac{\epsilon}{\beta} \cdot N_m +o(N_m)\bigg) \,dt && \quad \text{(by equations \eqref{eq:discretize} and \eqref{eq:estimate-Umt})}\\
&=\frac{\epsilon^2y_m}{\alpha \beta^2}+o(y_m)
\end{align*}
as $m \to \infty$, as required.
\end{proof}

\begin{lem}\label{lem: nonempty}
Let $\{a_n\}$ be a strictly increasing sequence of real numbers such that the set $\{\lfloor a_n \rfloor\} \subset \Z$ has positive upper Banach density. Let $\eta \in (0,1)$ and define $A'=\bigcup_{n=1}^{\infty} [a_n,a_n+\eta].$
Let $\{\mu_n\}$ be a sequence of positive real numbers such that $\mu_{i}/\mu_{j} \not \in \Q$ for all $i \neq j$. If $\sum_{n=1}^{\infty} 1/\mu_n=\infty$, then for any $\epsilon>0$, the set
$A' \cap \bigcup_{n=1}^{\infty} (\mu_n \Z+[0,\epsilon])$
is nonempty.
\end{lem}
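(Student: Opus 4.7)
The plan is to argue that for suitable $N$ and $m$, the set $A' \cap \bigl(\bigcup_{n=1}^N U_n\bigr) \cap [h_m, h_m+y_m]$ has positive Lebesgue measure, where $U_n := \mu_n\Z + [0,\epsilon]$. As a preliminary reduction, if some $\mu_n \leq \epsilon$ then $U_n = \R$ and $A' \cap U_n = A' \neq \emptyset$, so I assume $\mu_n > \epsilon$ throughout; this also makes Lemma~\ref{lem: alphabeta} applicable to every pair $(\mu_i,\mu_j)$. Using the positive upper Banach density $\delta_0 > 0$ of $K := \{\lfloor a_n\rfloor\}$, I would extract sequences $\{h_m\},\{y_m\}$ with $y_m \to \infty$ and $\#(K \cap [h_m, h_m+y_m]) \geq (\delta_0/2)\,y_m$. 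For each $k$ in this intersection, select $a_{n_k}$ with $\lfloor a_{n_k}\rfloor = k$; the interval $[a_{n_k},a_{n_k}+\eta] \subseteq [k,k+2)$ has measure $\eta$, and since each real number lies in at most two intervals of the form $[k,k+2)$ with $k\in\Z$, a double-counting argument yields $|A' \cap [h_m, h_m+y_m+2]| \geq c\, y_m$ for some constant $c = c(\eta,\delta_0) > 0$. Replacing $y_m$ by $y_m+2$ (and shrinking $c$ harmlessly), I may assume $|A' \cap [h_m, h_m+y_m]| \geq c\, y_m$ for all large $m$.

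Next I would invoke the Chung--Erd\H{o}s inequality after normalizing $P_m(\cdot) := y_m^{-1}\bigl|\cdot \cap [h_m, h_m+y_m]\bigr|$ to a probability measure on $[h_m, h_m+y_m]$. An elementary count of arithmetic progressions gives $P_m(U_i) \to \epsilon/\mu_i$ as $m\to\infty$, and Lemma~\ref{lem: alphabeta}, applicable since $\mu_i/\mu_j \notin \Q$ for $i \neq j$, gives $P_m(U_i \cap U_j) \to \epsilon^2/(\mu_i\mu_j)$. The Chung--Erd\H{o}s bound $P_m\bigl(\bigcup_{i=1}^N U_i\bigr) \geq \bigl(\sum_{i} P_m(U_i)\bigr)^2 / \sum_{i,j} P_m(U_i\cap U_j)$ therefore converges, as $m\to\infty$ for fixed $N$, to
\begin{align*}
r_N := \frac{\epsilon^2 S_N^2}{\epsilon S_N + \epsilon^2\bigl(S_N^2 - \sum_{i=1}^N 1/\mu_i^2\bigr)}, \qquad S_N := \sum_{i=1}^N \frac{1}{\mu_i}.
\end{align*}
Since $\sum 1/\mu_n = \infty$ and $\mu_n > \epsilon$ forces $\sum_{i=1}^N 1/\mu_i^2 \leq S_N/\epsilon$, a direct calculation gives $r_N \to 1$ as $N\to\infty$.

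To conclude I would choose $N$ with $r_N > 1 - c/3$ and then $m$ large enough that $P_m\bigl(\bigcup_{i=1}^N U_i\bigr) > 1 - c/2$. The complement of $\bigcup_{i=1}^N U_i$ inside $[h_m, h_m+y_m]$ then has measure at most $c y_m/2$, which is strictly smaller than $|A' \cap [h_m, h_m+y_m]| \geq c y_m$; hence $A' \cap \bigcup_{i=1}^N U_i$ has positive Lebesgue measure inside $[h_m, h_m+y_m]$, and in particular is nonempty. The heavy lifting is already done by Lemma~\ref{lem: alphabeta}; the main remaining obstacle is the bookkeeping in the first step to convert the integer-theoretic density of $K$ into a genuine Lebesgue density for $A'$, where care is needed because the intervals $[a_n,a_n+\eta]$ attached to consecutive $\lfloor a_n\rfloor$-values may overlap slightly.
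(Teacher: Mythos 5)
Your proposal is correct and follows essentially the same route as the paper: convert the upper Banach density of $\{\lfloor a_n\rfloor\}$ into a positive Lebesgue density for $A'$ on windows $[h_m,h_m+y_m]$, feed the asymptotics $P_m(U_i)\to\epsilon/\mu_i$ and (via Lemma~\ref{lem: alphabeta}) $P_m(U_i\cap U_j)\to\epsilon^2/(\mu_i\mu_j)$ into the Chung--Erd\H{o}s inequality, and conclude by comparing measures inside a single window. The only differences are bookkeeping: you handle the possible overlaps of the intervals $[a_n,a_n+\eta]$ by double counting rather than passing to a $2$-separated subsequence, and you track the explicit limit $r_N\to 1$ instead of choosing $k$ with $\sum_{i\leq k}\epsilon/\mu_i>1/\delta+1$; both choices are sound.
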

\begin{proof}
Fix $\epsilon>0$. Note that if there is an $n$ such that $\mu_n \leq \epsilon$, then $\mu_n \mathbb{Z}+[0,\epsilon]=\R$ and the statement of the proposition is trivial. Therefore, we may assume that $\epsilon<\mu_n$ for all $n$. Moreover, we may assume that $a_{n+1} - a_n > 2$. If this is not the case, apply the following argument to a subsequence of $\{a_n\}$ with this property.

Since the set $A=\{\lfloor a_n \rfloor\} \subset \Z$ has positive upper Banach density, we can find some $\delta'>0$ and two positive integer sequences $\{y_m\}$ and $\{h_m\}$ with $y_m \to \infty$ as $m \to \infty$, such that
\begin{align*}
    \#\big(A \cap \{h_m+1, h_m+2, \ldots, h_m+y_m\}\big)>\delta' {y_m}
\end{align*}
holds for all sufficiently large $m$. In particular, choosing $\delta = \delta'\eta/2$ we obtain the estimate
\begin{align}\label{eq:thickening}
    \big|A' \cap [h_m, h_m+y_m]\big|> \frac{\delta' \eta}{2} y_m = \delta y_m
\end{align}
for all sufficiently large $m$. 

For each $m$, let $\P_m$ be the uniform probability measure on the interval $[h_m / y_m, (h_m / y_m) + 1]$, and 
for each $i,m \in \N$, let 
\begin{align*}
B_{i,m}=\frac{(\mu_i\Z+[0,\epsilon]) \cap [h_m, h_m+y_m]}{y_m}.
\end{align*}
Observe that for each $i$, as $m \to \infty$, 
\begin{equation}\label{eq:i}
    \P_m(B_{i,m})=\frac{\epsilon}{\mu_i}+o(1).    
\end{equation}
For each $i \neq j$, since $\mu_{i}/\mu_{j} \not \in \Q$, Lemma~\ref{lem: alphabeta} implies that, as $m \to \infty$,
\begin{align}
\P_m(B_{i,m} \cap B_{j,m})
&=\bigg|\frac{(\mu_i\Z+[0,\epsilon]) \cap (\mu_j\Z+[0,\epsilon]) \cap [h_m, h_m+y_m]}{y_m}\bigg| \notag \\
&=\frac{\epsilon^2}{\mu_i\mu_j}+o(1) \notag \\
&=\P_m(B_{i,m})\P_m(B_{j,m})+o(1). \label{eq:ij}
\end{align}

Since $\sum_{n=1}^{\infty} 1/\mu_n=\infty$, we can choose $k$ such that
\begin{align}\label{eq:choice-of-k}
    \sum_{i=1}^k \frac{\epsilon}{\mu_i}>\frac{1}{\delta}+1.
\end{align}

We have from the Chung--Erd\H{o}s inequality \cite{CE52} that
\begin{align*}
\P_m \bigg(\bigcup_{i=1}^{k} B_{i,m}\bigg)
&\geq \frac {\left(\sum _{i=1}^{k}\P_m(B_{i,m})\right)^{2}}{\sum_{i=1}^{k}\sum _{j=1}^{k}\P_m(B_{i,m}\cap B_{j,m})}\\
&=\frac {\left(\sum _{i=1}^{k}\P_m(B_{i,m})\right)^{2}}{\sum_{\substack{1 \leq i,j \leq k \\i \neq j}}\P_m(B_{i,m}\cap B_{j,m})+\sum _{i=1}^{k}\P_m(B_{i,m})}.
\end{align*}

Then, as $m \to \infty$, 
\begin{align*}
\P_m \bigg(\bigcup_{i=1}^{k} B_{i,m}\bigg)
&\geq \frac{\left(\sum _{i=1}^{k}\P_m(B_{i,m})\right)^{2}}{\sum_{\substack{1 \leq i,j \leq k \\ i \neq j}}\big(\P_m(B_{i,m})\P_m(B_{j,m})+o(1)\big)+\sum_{i=1}^{k}\P_m(B_{i,m})} && \text{(by equation \eqref{eq:ij})} \\
&\geq \frac{\left(\sum _{i=1}^{k}\P_m(B_{i,m})\right)^{2}}{\left(\sum _{i=1}^{k}\P_m(B_{i,m})\right)^{2}+\left(\sum _{i=1}^{k}\P_m(B_{i,m})\right)+o(1)}\\
&= \frac{\left(\sum_{i=1}^k \frac{\epsilon}{\mu_i}+o(1)\right)^2}{\left(\sum_{i=1}^k \frac{\epsilon}{\mu_i}+o(1)\right)^2+ \sum_{i=1}^k \frac{\epsilon}{\mu_i}+o(1)} && \text{(by equation \eqref{eq:i})} \\
&> \frac{1}{1+\left({\frac{1}{\delta}+1+o(1)}\right)^{-1}} && \text{(by inequality \eqref{eq:choice-of-k})}.
\end{align*}
In particular, we can find a sufficiently large $m_0$  such that 
\begin{equation}\label{eq:union}
\bigg| \bigcup_{i=1}^{k} B_{i,m_0}\bigg|=\P_{m_0} \bigg(\bigcup_{i=1}^{k} B_{i,{m_0}}\bigg)\geq \frac{1}{1+\delta}>1-\delta.    
\end{equation}
Applying inequality~\eqref{eq:thickening} and inequality~\eqref{eq:union}, we find that
$$
\bigg| A' \cap [h_{m_0}, h_{m_0}+y_{m_0}]\bigg|+ \bigg| \bigcup_{i=1}^{k} (\mu_i \Z+[0,\epsilon]) \cap [h_{m_0}, h_{m_0}+y_{m_0}]\bigg|>y_{m_0}=\big|[h_{m_0}, h_{m_0}+y_{m_0}]\big|.
$$
We conclude that $A' \cap \bigcup_{i=1}^{k} (\mu_i \Z+[0,\epsilon]) \neq \emptyset. $
\end{proof}

We are now ready to prove Theorem~\ref{thm:Banach}.
\begin{proof}[Proof of Theorem~\ref{thm:Banach}]
We can assume that $\delta<1$. Without loss of generality, we can assume that $\Delta_n=[b_n, b_n+\delta]$ for each $n \in \N$, where $b_n \in [0,1)$. We show that $X \cap (0, \infty)$ is countable. The proof that $X \cap (-\infty,0)$ is countable is very similar. Thus, without loss of generality, we may assume that all elements in $X$ are positive. 

Suppose otherwise that $X$ is uncountable. Set $\rho=1+\delta/2$. Then we can find some integer $k$ such that $[\rho^k, \rho^{k+1}] \cap X$ is uncountable. In particular, we can find a sequence $\{\lambda_n\} \subset [\rho^k, \rho^{k+1}] \cap X$ such that $\lambda_{i}/\lambda_{j} \not \in \Q$ whenever $i \neq j$. Let $\mu_n=1/\lambda_n$ for each $n$. Note that $\sum_{n=1}^{\infty} 1/\mu_n=\infty$ and $\mu_{i}/\mu_{j} \not \in \Q$ whenever $i \neq j$. 

Since each $\lambda_i \in X$, we have that for every $n$, 
$a_n \lambda_i \not \in \Z+[b_n, b_n+\delta].$ Equivalently, $a_n \notin \mu_i\Z+[\mu_i b_n, \mu_i(b_n+\delta)]$. Note that $1/\rho^{k+1}\leq \mu_i \leq 1/\rho^k$, and
$$
\frac{b_n+\delta}{\rho^{k+1}}-\frac{b_n}{\rho^k}=\frac{b_n(1-\rho)+\delta}{\rho^{k+1}} \leq \frac{\delta+1-\rho}{\rho^{k+1}}=\frac{\delta}{2\rho^{k+1}}.
$$
Thus, $a_n \notin \mu_i\Z+[b_n/\rho^k,(b_n+\delta)/\rho^{k+1}]$.

Therefore, for each $i,n$, 
\begin{equation}\label{eq:empty}
a_n-b_n/\rho^k \notin (\mu_i\Z+[0,\delta/(2\rho^{k+1})]).    
\end{equation}
Let $\eta=\epsilon=\delta/(4\rho^{k+1})$. Let $a_n'=a_n- \eta -b_n/\rho^k$. Since $b_n/\rho^k \in [0,1/\rho^k)$, it is easy to verify that the set $\{\lfloor a_n' \rfloor\} \subset \Z$ still has positive upper Banach density. Let 
$A'=\bigcup_{n=1}^{\infty} [a_n',a_n'+\eta].$ Then equation~\eqref{eq:empty} implies that
$A' \cap \bigcup_{i=1}^{\infty} (\mu_i \Z+[0,\epsilon])=\emptyset,$ violating Lemma~\ref{lem: nonempty}.
\end{proof}

The special case of Theorem~\ref{thm:Banach} where $\Delta_1 = \Delta_n$ for every $n \in \N$ shows that the exceptional set associated with a given sequence in Theorem~\ref{th:main-result} is countable. 

\begin{cor}\label{cor:Banach}
Let $\{a_n\}$ be a strictly increasing sequence of real numbers such that the set of integers $\{\lfloor a_n \rfloor\}$ has positive upper Banach density.  Then $E=E(\{a_n\})$ is countable.
\end{cor}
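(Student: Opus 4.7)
The plan is to combine the definition of $E$ with Theorem~\ref{thm:Banach} applied to constant interval sequences. First, I would unpack what it means for $x$ to lie in $E$: by definition, the sequence $\{\langle x a_n \rangle\}_{n \in \N}$ fails to be dense in $[0,1)$, so it misses some nonempty open subinterval of $[0,1)$. A straightforward topological argument then allows me to shrink this interval to one with rational endpoints: there exist $p, q \in \Q \cap [0,1]$ with $p < q$ such that $\langle x a_n \rangle \notin [p,q]$ for every $n \in \N$.

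Next, I would express $E$ as a countable union indexed by such pairs of rationals:
\[
E \subseteq \bigcup_{\substack{p, q \in \Q \cap [0,1] \\ p < q}} X_{p,q}, \qquad \text{where } X_{p,q} = \{x \in \R : a_n x \notin [p,q] \pmod 1 \text{ for all } n \in \N\}.
\]
For each fixed pair $(p,q)$, the set $X_{p,q}$ is precisely the set $X$ appearing in Theorem~\ref{thm:Banach}, applied with the constant interval sequence $\Delta_n = [p,q]$ and $\delta = q - p > 0$. Theorem~\ref{thm:Banach} therefore tells me that each $X_{p,q}$ is countable.

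The conclusion is then immediate, since a countable union of countable sets is countable. There is no real obstacle here: the hard analytic work is already contained in Theorem~\ref{thm:Banach}, and the corollary is essentially a formal bookkeeping step converting the ``avoid a fixed interval for all $n$'' formulation into a statement about the exceptional set $E$.
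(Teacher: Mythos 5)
Your proposal is correct and follows essentially the same route as the paper: both decompose $E$ into a countable union of sets $\{x : a_n x \notin [p,q] \pmod 1 \text{ for all } n\}$ indexed by rational pairs $p<q$, and apply Theorem~\ref{thm:Banach} with a constant interval sequence to each piece. No gaps.
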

\begin{proof}
For each $\alpha<\beta$, define $E_{\alpha,\beta} = \{x \in \mathbb{R} :
a_nx \not \in [\alpha, \beta] \pmod 1 \text{ for all } n \in \N\}$. Theorem~\ref{thm:Banach} implies that $E_{\alpha, \beta}$ is countable for each $\alpha<\beta$. We claim that $E \subseteq \bigcup_{\alpha<\beta, \alpha, \beta \in \Q}E_{\alpha, \beta}$. Indeed, if $x \in E$, then there exist $\theta \in \R$ and $\epsilon>0$ such that $a_nx \not \in (\theta-\epsilon, \theta+\epsilon) \pmod 1$ for all $n \in \N$, which implies that $x \in E_{\alpha, \beta}$ for some pair of rational numbers $\alpha<\beta$ in the interval $(\theta-\epsilon, \theta+\epsilon)$. Thus, $E$ is a subset of a countable set and must therefore be countable.
\end{proof}


\section{Avoiding affine copies of exponential sequences}\label{sec:exponential}

In this section, we use tools from Diophantine approximation to construct a large set that avoids all affine copies of $\{b^n\}$, where $b>1$ is an algebraic number. In particular, Theorem~\ref{thm:2n} follows immediately by taking the intersection of the set we construct in Corollary~\ref{cor:irrational} and the one we construct in Proposition~\ref{prop:countable} below by setting $B=\Q(b) \setminus \{0\}$.

The \textit{length} of a polynomial $F \in \R[x]$, denoted $L(F)$, is the sum of the absolute values of its coefficients. Consider the set of polynomials 
$$
\Gamma=\{a_0+a_1x+\cdots+a_mx^m \in \R[x]: m \in \N, a_0 = 1 \text{ or } a_m = 1\}.
$$
Given an algebraic number $b$, we can find its minimal polynomial $P(x) \in \Z[x]$, say 
$$
P(x)=a_dx^d+\cdots+a_1x+a_0,
$$
where $a_d>0$. The \emph{length} of $b$ is $L(b)=L(P)$ and the \emph{reduced length} of $b$, denoted $\ell(b)$, is
$$
\ell(b)=\inf_{G \in \Gamma} L(PG).
$$
Clearly, $\ell(b)\leq L(b)$. We refer to \cite[Section 6]{D06} and \cite{S06, S07} for a systematic study of the reduced length of algebraic numbers. If $b=p/q>1$, where $p,q$ are coprime positive integers, then $\ell(b)=\max\{p,q\}=p$ \cite[Corollary 1]{D06}.

Our construction below is motivated by the following theorem proved by Dubickas using tools from Diophantine approximation and combinatorics on words.

\begin{externaltheorem}[Dubickas]\label{thm:D06}
Let $b>1$ be an algebraic number. Then for each $\alpha \in \R \setminus \Q(b)$, the sequence $\{\fract{\alpha b^n}\}$ cannot be contained in an interval $I$ of the torus with length strictly smaller than $1/\ell(b)$.     
\end{externaltheorem}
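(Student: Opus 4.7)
The plan is to argue by contradiction. Suppose $\alpha \in \R \setminus \Q(b)$ and there is an interval $I \subset [0,1)$ of length $\tau < 1/\ell(b)$ with $\langle \alpha b^n\rangle \in I$ for every $n \in \N$. By the definition of $\ell(b)$ as an infimum over $\Gamma$, I can pick $G \in \Gamma$ so that $R := PG = \sum_{i=0}^{K} c_i x^i$ satisfies $L := L(R) = \sum_i |c_i| < 1/\tau$. Since $P(b) = 0$ gives $R(b) = 0$, multiplying by $\alpha b^n$ yields the fundamental identity
\[
\sum_{i=0}^K c_i\, \alpha b^{n+i} = 0 \qquad (n \in \N).
\]
I then decompose $\alpha b^{n+i} = M_{n+i} + F_{n+i}$ with $M_{n+i} = \lfloor \alpha b^{n+i}\rfloor \in \Z$ and $F_{n+i} = \langle \alpha b^{n+i}\rangle \in I$; the identity becomes $\sum_i c_i M_{n+i} = -\sum_i c_i F_{n+i}$, whose right-hand side lies, for every $n$, in a fixed interval $J \subset \R$ of length exactly $L\tau < 1$.

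If the coefficients $c_i$ happened to be integers, the conclusion would follow: integrality of $\sum c_i M_{n+i}$ together with $|J| < 1$ would force it to be eventually constant, hence $\sum c_i F_{n+i}$ to be constant as well. Differencing in $n$ and iterating would impose a rigid linear recurrence on $(F_n)$, which, given the geometric growth $\alpha b^n \asymp b^n$, is sufficient to place $\alpha \in \Q(b)$, the desired contradiction. The point of the fundamental identity is precisely to convert a metric hypothesis on the fractional parts into an arithmetic rigidity condition on the integer parts.

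The principal obstacle is that $G \in \Gamma$ is allowed to have real coefficients, so the $c_i$ need not be integers. My plan to overcome this is to exploit the normalization built into $\Gamma$ (that $g_0 = 1$ or $g_s = 1$) together with an approximation argument. By symmetry (replacing $R$ with its reciprocal $x^K R(1/x)$ and $\alpha b^n$ with $\alpha b^{-n}$), I may assume $g_0 = 1$, so that $c_0 = a_0$ is already an integer. I would then approximate $G$ by rational-coefficient polynomials and clear denominators to obtain an auxiliary polynomial $Q \in \Z[x]$ with $Q(b) = 0$ and $L(Q)$ close to $L(PG)$, apply the integer argument to $Q$, and control the discrepancy $R - Q$ perturbatively via Diophantine estimates on $\|q b^n\|$ for $q \in \Q(b)$ (e.g.\ through the Galois conjugates of $b$ or the Mahler measure). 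The hardest step will be this error control: since $|M_n| \asymp b^n$ grows geometrically, even a tiny perturbation of the $c_i$ can produce an error of order $b^n$ on the left-hand side, and one must play this growth off against the strict inequality $L\tau < 1$ and a lower-semicontinuity argument for the functional $L(PG)$ on $\Gamma$ in order to extract a contradiction.
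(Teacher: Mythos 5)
You should first be aware that the paper does not actually prove this statement from first principles: its ``proof'' consists of quoting Dubickas's results (\cite[Theorem 1]{D06} for $b$ neither Pisot nor Salem, \cite[Theorem 2]{D06b} for the Pisot/Salem case) on $\limsup_n \fract{\alpha b^n + t} - \liminf_n \fract{\alpha b^n + t} \geq 1/\ell(b)$, and observing in one line that this forbids confinement to a short interval. Your proposal instead attempts to reprove Dubickas's theorem. Your starting point --- the identity $\sum_i c_i \alpha b^{n+i} = 0$ for $R = PG$ and the split into integer and fractional parts --- is indeed the right skeleton, but the plan has a gap at exactly the step you flag as hardest, and that gap is not repairable along the lines you propose.

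The reduction from real to integer coefficients fails. If you approximate $G$ by a rational-coefficient polynomial and clear denominators, you replace $R$ by $qR'$ with $q \in \N$ the common denominator, and $L(qR') = qL(R')$; this destroys the inequality $L < 1/\tau$, which is the only thing driving the argument. This is not a removable technicality: the entire content of the reduced length is that $\ell(b)$ can be strictly smaller than anything achievable with integer-coefficient multiples of $P$ (already for $b = p/q$, where $\ell(b) = p$, the near-optimal $G \in \Gamma$ must have denominators growing without bound). The perturbative fallback also cannot work, for the reason you yourself identify: perturbing a single $c_i$ by $\eta$ changes the left-hand side by $\eta M_{n+i} \asymp \eta b^{n}$, which is unbounded in $n$, while the available slack $1 - L\tau$ is a fixed constant; lower semicontinuity of $G \mapsto L(PG)$ is irrelevant because the obstruction is non-integrality, not discontinuity of the length. (The normalization $g_0 = 1$ only makes the single coefficient $c_0$ an integer, which does not make $\sum_i c_i M_{n+i}$ an integer.) A secondary gap: even granting integer $c_i$, constancy of $\sum_i c_i M_{n+i}$ yields a linear recurrence for the integer parts, but extracting $\alpha \in \Q(b)$ from it requires controlling the contributions of roots of $PG$ of modulus $\geq b$; and since Dubickas's bound holds for \emph{every} nonzero $\alpha$ when $b$ is neither Pisot nor Salem, the contradiction cannot in general be ``$\alpha \in \Q(b)$''. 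Dubickas's actual proofs keep the real coefficients of $G$ throughout and argue via the integer-coefficient relation coming from $P$ itself combined with a convolution by $G$ (and, in the Pisot/Salem case, combinatorics on words); if you want a complete argument here, the honest route is the one the paper takes, namely citing \cite{D06, D06b}.
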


\begin{proof}
Dubickas (\cite[Theorem 1]{D06}, \cite[Theorem 2]{D06b}) shows that if 
$\alpha$ is a nonzero number (and if $b$ is a Pisot number or a Salem number, then further assume that $\alpha \not \in \Q(b)$), then for any $t \in \R$, we have
$$
\limsup_{n \to \infty} \fract{\alpha b^n+t}-\liminf_{n \to \infty} \fract{\alpha b^n+t} \geq \frac{1}{\ell(b)}.
$$
Thus, whenever $\alpha \notin \Q(b)$, the sequence $\{\fract{\alpha b^n}\}$ cannot be contained in an interval $I$ of the torus with length strictly smaller than $1/\ell(b)$. 
\end{proof}

\begin{rem}
When $b=p/q$ for some coprime integers $p,q$ satisfying $1<q<p<q^2$,  Dubickas \cite{D09} proves that for each closed subinterval $I$ of length $1/p$ of the torus and each \emph{nonzero real} number $\alpha$, there are infinitely many $n \in \N$ such that $\fract{\alpha \cdot (p/q)^n}\notin I$. Note that $\ell(p/q)=p$. Thus, when $b=p/q$, the above theorem in fact holds whenever $\alpha \neq 0$.  We refer to the related discussion on the distribution of the fractional parts of powers of algebraic numbers in \cite[Chapter 3]{B12} and \cite{D19}.

On the other hand, very little is known about the distribution of the fractional parts of powers of transcendental numbers. We refer the interested reader to \cite[Chapter 2]{B12}.
\end{rem}

\begin{cor}\label{cor:irrational}
Let $\epsilon \in (0, 1/\ell(b))$ and let $b>1$ be an algebraic number. There exists a set $S \subset \R$ such that $\vert S \cap I \vert \geq 1/\ell(b) - \epsilon$ for every interval $I \subset \mathbb{R}$ of unit length, but $S$ does not contain any sequence of the form $\{\alpha b^n+t\}$, where $\alpha \in \R \setminus \Q(b)$ and $t \in \R$.
\end{cor}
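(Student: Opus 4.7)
The plan is to realize $S$ as the complement of a narrow $1$-periodic slab, sized so that Dubickas's theorem forces every orbit $\{\alpha b^n+t\}$ with $\alpha \notin \Q(b)$ to escape $S$ at some index. The mechanism is the following: the Dubickas theorem quoted above says that $\{\langle \alpha b^n + t\rangle\}$ cannot lie in any torus arc of length smaller than $1/\ell(b)$. So if I arrange for $\R\setminus S$ to be the preimage, under $x\mapsto \langle x\rangle$, of a torus arc of length strictly greater than $1-1/\ell(b)$, then every such orbit must visit $\R\setminus S$, forcing $S$ not to contain it.

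To carry this out, I would fix $\eta \in (1 - 1/\ell(b),\ 1 - 1/\ell(b) + \epsilon)$, which is nonempty since $\epsilon>0$, choose any subinterval $J \subset [0,1)$ with $|J|=\eta$, and set
$$T = \{x \in \R : \langle x \rangle \in J\}, \qquad S = \R \setminus T.$$
The density estimate is the easy step. Because $T$ is $1$-periodic with $|T \cap [0,1)| = \eta$, for every unit interval $I \subset \R$ we have $|S \cap I| = 1 - \eta > 1/\ell(b) - \epsilon$, as required.

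For the avoidance property, I would argue by contradiction. Assume $\{\alpha b^n + t\} \subset S$ for some $\alpha \in \R \setminus \Q(b)$ and $t \in \R$. Then $\langle \alpha b^n + t\rangle \in [0,1) \setminus J$ for every $n$. But $[0,1)\setminus J$, viewed on the torus $\R/\Z$, is a single arc of length $1-\eta < 1/\ell(b)$. Rotating by $-t$ on the torus, $\{\langle \alpha b^n\rangle\}$ is contained in an arc of the same length, contradicting Dubickas's theorem. I expect no real obstacle here: the construction is a direct application of Dubickas's theorem once the correct width $\eta$ is chosen. The only minor bookkeeping is accounting for the possible wraparound of $[0,1)\setminus J$ on the torus, which does not affect its torus length $1-\eta$.
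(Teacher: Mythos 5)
Your proposal is correct and is essentially the paper's own argument: the paper takes $S=\{x:\langle x\rangle\in[0,1/\ell(b)-\epsilon]\}$, which is exactly your periodic set with fractional parts confined to a torus arc of length less than $1/\ell(b)$, and invokes Dubickas's theorem in the same way (the version with the shift $t$ is what Dubickas actually proves, as noted in the paper's proof of that theorem). Describing $S$ as the complement of a large arc rather than directly as a small arc is only a cosmetic difference.
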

\begin{proof}
Let
$$
S=\bigg\{x \in \R: 0\leq \fract{x}\leq \frac{1}{\ell(b)}-\epsilon\bigg\}.
$$
In view of Theorem~\ref{thm:D06}, $S$ cannot contain any affine copy of $\{b^n\}$ with a dilation factor outside $\Q(b)$.
\end{proof}

\begin{rem}\label{integerb}
Consider the case that $b \geq 2$ is an integer. Note that $\ell(b)=b$ and $\Q(b)=\Q$. Let $\epsilon \in (0, 1/b)$. Let $T=\{x \in \R: 0<\fract{x}<\frac{1}{b}-\epsilon/2\}$ be the set from Corollary \ref{cor:irrational}; then $T$ already already avoids all irrational copies of the sequence $\{b^n\}$, that is, all affine copies of the form $\{\alpha b^n+t\}$, where $\alpha \in \R \setminus \Q$ and $t \in \R$. Here we explicitly construct a set $S$ such that $\vert S \cap I \vert \geq 1/b - \epsilon$ for every interval $I \subset \mathbb{R}$ of unit length, that avoids \textit{all} affine copies of $\{b^n\}$.

A similar argument as in the proof of Lemma~\ref{lem: construction} shows that if $\beta$ is an irrational number, then $S=T \cap \beta T$ would do the job; unfortunately, note that the intersection between $S$ and a unit interval could be small. To resolve this issue, one can modify the construction described in \cite[Section 2]{BKM23} to avoid all rational copies of $\{b^n\}$ using the fact that $\{b^n\}$ is eventually periodic modulo any positive integer. The specific set resulting from this construction is $S = T \cap ((\R^+\setminus U) \cup - (\R^+\setminus U))$, where
\begin{align*}
   U= \bigcup_{k \in \N} \bigcup_{i=0}^{N-1} U_{k,i}, \quad  U_{k,i}=\bigcup_{m \in J_{k,i}}\bigg[m+\frac{i}{N}, m+\frac{i+1}{N}\bigg), \quad J_{k,i}=[2^{2^{kN+i}},2^{2^{kN+i+1}}] \cap \N,
\end{align*}
and $N$ is a positive integer such that $1/N<\epsilon/2$. 
\end{rem}

\medskip

In general, when $b$ is not necessarily an integer, we need to construct a large set that avoids those affine copies of $\{b^n\}$ with dilation factor in $\Q(b)$. However, the construction described in Remark~\ref{integerb} fails to extend to this more general setting due to its number theoretic nature. Fortunately, using the fact that $\Q(b)$ is a countable set, we can still obtain a suitable large set by following the more flexible construction below.

\begin{prop}\label{prop:countable}
    Let $\{a_n\}$ be an increasing sequence of real numbers such that $a_n \to \infty$. For every $\epsilon \in (0,1]$ and every countable set $B\subset \R \setminus \{0\}$, there exists a $(1-\epsilon)$-large set $S \subset \mathbb{R}$ that does not contain any subset of the form $b\{a_n\} + t$ with $b \in B$ and $t \in \R$.
\end{prop}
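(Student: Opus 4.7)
The plan is to construct $S$ as the complement of a countable union of short intervals, one for each pair (dilation, rational translation), taking advantage of the countability of $B$ and the density of $\Q$. The governing observation is that to prevent $S$ from containing the set $b\{a_n\}+t$ it suffices to ensure that a \emph{single} term $ba_n+t$ fails to lie in $S$; and since we have an entire increasing sequence of candidate indices $n$ to work with for each $b$, we have considerable freedom in choosing the solitary ``forbidden point'' that handles each translate.

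First, I would enumerate $B=\{b_i\}_{i\ge 1}$ and $\Q=\{q_k\}_{k\ge 1}$, and fix $\delta_i:=\epsilon\cdot 2^{-i-3}$. For each pair $(i,k)$, I would inductively choose, using only that $a_n\to\infty$, a strictly increasing sequence of indices $n(i,1)<n(i,2)<\cdots$ in $\N$ such that the ``centers'' $c_{i,k}:=b_i\, a_{n(i,k)}+q_k$ are strictly monotone in $k$ with consecutive absolute gaps at least $2$; this is possible because at each inductive step I may take $a_{n(i,k+1)}$ as large as I like (in particular much larger than $|q_{k+1}|/|b_i|$). Then I would define $I_{i,k}:=[c_{i,k}-\delta_i,\,c_{i,k}+\delta_i]$, $U_i:=\bigcup_k I_{i,k}$, $U:=\bigcup_i U_i$, and $S:=\R\setminus U$.

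For the density bound: because the centers $c_{i,k}$ (for fixed $i$) are at least $2$ apart while each $I_{i,k}$ has total length $2\delta_i<1$, no two intervals $I_{i,k}$ and $I_{i,k'}$ with $k\neq k'$ can simultaneously meet any single integer-aligned unit interval $[m,m+1)$. Hence $|U_i\cap[m,m+1)|\le 2\delta_i$, and summing over $i$ gives $|U\cap[m,m+1)|\le\sum_i 2\delta_i<\epsilon/2$. Since an arbitrary unit interval $I\subset\R$ straddles at most two integer-aligned ones, $|U\cap I|<\epsilon$ and therefore $|S\cap I|>1-\epsilon$ for every unit interval $I$.

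The avoidance verification is essentially a single line. Fix any $b_i\in B$ and any $t\in\R$; the density of $\Q$ allows me to choose $k$ with $|q_k-t|<\delta_i$, and then
\[
b_i\, a_{n(i,k)}+t \;=\; c_{i,k}+(t-q_k) \;\in\; I_{i,k} \;\subseteq\; U,
\]
so $b_i\{a_n\}+t$ has a point in $U=S^c$, i.e.\ $S\not\supseteq b_i\{a_n\}+t$. There is no serious obstacle in this plan; the only subtlety is ensuring the gap condition $|c_{i,k+1}-c_{i,k}|\ge 2$ despite the $q_k$'s being unbounded rationals, and this is handled by the freedom to push $a_{n(i,k+1)}$ as far out in the sequence $\{a_n\}$ as needed at each inductive step.
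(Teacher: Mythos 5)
Your proof is correct and follows essentially the same strategy as the paper's: both take $S$ to be the complement of a countable union of short, well-separated forbidden intervals, one anchored at $b\,a_{n}+(\text{a discretized value of }t)$ for each (dilation, discretized translation) pair, with $a_n\to\infty$ supplying the separation and hence the smallness of the union in every unit interval. The only differences are bookkeeping: you discretize $t$ by a rational $q_k$ within $\delta_i$ and absorb the error into geometrically decaying interval widths, whereas the paper discretizes $t$ by $\lfloor t\rfloor$ together with which of $N$ subintervals of $[0,1)$ contains $\langle t\rangle$, uses a flat width $1/N$, and splits into four sign cases.
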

\begin{proof}
Let $\epsilon \in (0,1]$ and choose $N \in \N$ so that $\epsilon \geq 4/N$. We will construct four sets $T_1, T_2, T_3, T_4 \subset \R$, so that for each $i \in \{1,2,3,4\}$, $T_i$ satisfies $\vert T_i \cap [m,m+1]\vert \leq \frac{1}{N}$ for every $m \in \Z$ and $T_i$ contains an element in the sequence $b\{a_n\} + t$ for each $(b,t) \in K_i$,
where
\begin{align*}   
K_1=\{(b,t)\in B \times \R: b>0, t \geq 0\}, \\ 
K_2=\{(b,t)\in B \times \R: b>0, t<0\}, \\
K_3=\{(b,t)\in B \times \R: b<0, t \geq 0\}, \\
K_4=\{(b,t)\in B \times \R: b<0, t<0\}. 
\end{align*}
It then follows that $S=\R \setminus (T_1 \cup T_2 \cup T_3 \cup T_4)$ satisfies the two required properties that $\vert S \cap [m,m+1]\vert \geq 1-\frac{4}{N}\geq 1-\epsilon$ for every $m \in \Z$, and $S$ does not contain the sequence $b\{a_n\} + t$ for any $b \in B$ and $t \in \R$.

The constructions of the sets $T_1, T_2, T_3, T_4$ are similar. Below, we focus only on the construction of $T_1$. 

Let $B \cap \R^+=\{b_1, b_2, \ldots\}$. Let $D=\N \times (\N \cup \{0\})$. Define a map $f: D \to \N$ by
$$
f(m,n)=\frac{(m+n)(m+n-1)}{2} +m.
$$
For example, $f(1,0)=1, f(1,1)=2$, and $f(2,0)=3$. Note that $f$ is bijective; in fact, $f$ describes an enumeration of the countable set $D$. Thus, we can define $f^{-1}(k)=(m_k,n_k) \in D$ for each $k \in \N$.

Since $a_n \to \infty$, we may further assume without loss of generality that 
\begin{equation}\label{eq:grow1}
b_{m_k}a_{kN+i+1}> b_{m_k}a_{kN+i}+2 
\end{equation}
holds for each $k \in \N$ and $0 \leq i \leq N-2$, and 
\begin{equation}\label{eq:grow2}
b_{m_{k+1}}a_{(k+1)N}+n_{k+1}> b_{m_{k}}a_{kN+(N-1)}+n_{k}+2    
\end{equation}
holds for each $k \in \N$; otherwise, we simply pass to a subsequence of $\{a_n\}$. Define the following set 
\begin{align*}
T_1
&=\bigcup_{(m,n) \in D} \bigcup_{i=0}^{N-1}  \bigg[b_ma_{f(m,n)N+i}+n+\frac{i}{N}, b_ma_{f(m,n)N+i}+n+\frac{i+1}{N}\bigg]\\
&=\bigcup_{k \in \N}\bigcup_{i=0}^{N-1}  \bigg[b_{m_k}a_{kN+i}+n_k+\frac{i}{N}, b_{m_k}a_{kN+i}+n_k+\frac{i+1}{N}\bigg].    
\end{align*}
Note that the growth assumptions \eqref{eq:grow1} and \eqref{eq:grow2} guarantee that the intersection between $T_1$ and each unit interval has size at most $\frac{1}{N}$. Finally, we verify that $T_1$ contains an element in the sequence $b\{a_n\} + t$ for each $(b,t) \in K_1$. Indeed, if $b=b_m$, $\lfloor t \rfloor=n$, and $0 \leq i \leq N-1$ is the unique integer such that $\frac{i}{N} \leq \langle t \rangle <\frac{i+1}{N}$, then 
\begin{equation*}
    b_ma_{f(m,n)N+i}+t \in \bigg[b_ma_{f(m,n)N+i}+n+\frac{i}{N}, b_ma_{f(m,n)N+i}+n+\frac{i+1}{N}\bigg] \subset T_1. 
\end{equation*}
This completes the proof.
\end{proof}

\section*{Acknowledgments}
The authors thank Malabika Pramanik and Joshua Zahl for many helpful discussions and for their valuable feedback on a preliminary version of the manuscript. The authors also thank the anonymous referees for their valuable comments and suggestions. The first author is funded by a scholarship from China Scholarship Council, NSFC grant No.12171172, and Natural Science Foundation of Hubei Province No.2022CFB093. The second author was supported by an NSERC Discovery Grant GR010263. 

\bibliographystyle{amsalpha}
\bibliography{biblio.bib}

\newcommand{\etalchar}[1]{$^{#1}$}
\providecommand{\bysame}{\leavevmode\hbox to3em{\hrulefill}\thinspace}
\providecommand{\MR}{\relax\ifhmode\unskip\space\fi MR }
\providecommand{\MRhref}[2]{%
  \href{http://www.ams.org/mathscinet-getitem?mr=#1}{#2}
}
\providecommand{\href}[2]{#2}
\begin{thebibliography}{BKM{\etalchar{+}}14}

\bibitem[AHK83]{AHK83}
M.~Ajtai, I.~Havas, and J.~Koml\'{o}s, \emph{Every group admits a bad
  topology}, Studies in pure mathematics, Birkh\"{a}user, Basel, 1983,
  pp.~21--34. \MR{820207}

\bibitem[Ami64]{A64}
Yvette Amice, \emph{Un th{é}or{è}me de finitude}, Ann. Inst. Fourier
  (Grenoble) \textbf{14} (1964), no.~2, 527--531. \MR{174542}

\bibitem[Bak11]{Baker11}
Roger~C. Baker, \emph{Sequences that omit a box (modulo 1)}, Adv. Math.
  \textbf{227} (2011), no.~5, 1757--1771. \MR{2803785}

\bibitem[BGK{\etalchar{+}}23]{BGKMW23}
Alex Burgin, Samuel Goldberg, Tam\'{a}s Keleti, Connor MacMahon, and Xianzhi
  Wang, \emph{Large sets avoiding infinite arithmetic / geometric
  progressions}, Real Anal. Exchange \textbf{48} (2023), no.~2, 351--364.
  \MR{4668953}

\bibitem[BKM{\etalchar{+}}14]{BKMST}
Vitaly Bergelson, Grigori Kolesnik, Manfred Madritsch, Younghwan Son, and
  Robert Tichy, \emph{Uniform distribution of prime powers and sets of
  recurrence and van der {C}orput sets in {$\Bbb{Z}^k$}}, Israel J. Math.
  \textbf{201} (2014), no.~2, 729--760. \MR{3265301}

\bibitem[BKM23]{BKM23}
Laurestine Bradford, Hannah Kohut, and Yuveshen Mooroogen, \emph{Large subsets
  of {E}uclidean space avoiding infinite arithmetic progressions}, Proc. Amer.
  Math. Soc. \textbf{151} (2023), no.~8, 3535--3545. \MR{4591786}

\bibitem[Bos94]{Boshernitzan94}
Michael~D. Boshernitzan, \emph{Density modulo {$1$} of dilations of sublacunary
  sequences}, Adv. Math. \textbf{108} (1994), no.~1, 104--117. \MR{1293584}

\bibitem[Bou87]{Bourgain87}
J.~Bourgain, \emph{Construction of sets of positive measure not containing an
  affine image of a given infinite structures}, Israel J. Math. \textbf{60}
  (1987), no.~3, 333--344. \MR{937795}

\bibitem[Bug12]{B12}
Yann Bugeaud, \emph{Distribution modulo one and {D}iophantine approximation},
  Cambridge Tracts in Mathematics, vol. 193, Cambridge University Press,
  Cambridge, 2012. \MR{2953186}

\bibitem[CE52]{CE52}
K.~L. Chung and P.~{{E}rd\H{o}s}, \emph{On the application of the
  {B}orel-{C}antelli lemma}, Trans. Amer. Math. Soc. \textbf{72} (1952),
  179--186. \MR{45327}

\bibitem[CLP23]{CLP23}
Angel~D. Cruz, Chun-Kit Lai, and Malabika Pramanik, \emph{Large sets avoiding
  affine copies of infinite sequences}, Real Anal. Exchange \textbf{48} (2023),
  no.~2, 251--270. \MR{4668947}

\bibitem[dM80]{Mathan80}
B.~de~Mathan, \emph{Numbers contravening a condition in density modulo {$1$}},
  Acta Math. Acad. Sci. Hungar. \textbf{36} (1980), no.~3-4, 237--241 (1981).
  \MR{612195}

\bibitem[Dub06a]{D06}
Art\={u}ras Dubickas, \emph{Arithmetical properties of powers of algebraic
  numbers}, Bull. London Math. Soc. \textbf{38} (2006), no.~1, 70--80.
  \MR{2201605}

\bibitem[Dub06b]{D06b}
\bysame, \emph{On the distance from a rational power to the nearest integer},
  J. Number Theory \textbf{117} (2006), no.~1, 222--239. \MR{2204744}

\bibitem[Dub09]{D09}
\bysame, \emph{Powers of a rational number modulo 1 cannot lie in a small
  interval}, Acta Arith. \textbf{137} (2009), no.~3, 233--239. \MR{2496462}

\bibitem[Dub19]{D19}
\bysame, \emph{Fractional parts of powers of large rational numbers}, Discrete
  Math. \textbf{342} (2019), no.~7, 1949--1955. \MR{3937755}

\bibitem[Eig85]{Eigen85}
S.~J. Eigen, \emph{Putting convergent sequences into measurable sets}, Studia
  Sci. Math. Hungar. \textbf{20} (1985), no.~1-4, 411--412. \MR{886044}

\bibitem[Erd74]{Erdos74}
P.~Erd\H{o}s, \emph{Remarks on some problems in number theory}, Math. Balkanica
  \textbf{4} (1974), 197--202. \MR{429704}

\bibitem[Fal84]{Falconer84}
K.~J. Falconer, \emph{On a problem of {{E}rd\H{o}s} on sequences and measurable
  sets}, Proc. Amer. Math. Soc. \textbf{90} (1984), no.~1, 77--78. \MR{722418}

\bibitem[Fur67]{F67}
Harry Furstenberg, \emph{Disjointness in ergodic theory, minimal sets, and a
  problem in {D}iophantine approximation}, Math. Systems Theory \textbf{1}
  (1967), 1--49. \MR{213508}

\bibitem[GLW23]{GLW23}
John Gallagher, Chun-Kit Lai, and Eric Weber, \emph{On a topological
  {{E}rd\H{o}s} similarity problem}, Bull. Lond. Math. Soc. \textbf{55} (2023),
  no.~3, 1104--1119. \MR{4599101}

\bibitem[Hai88]{H88}
J.~A. Haight, \emph{On multiples of certain real sequences}, Acta Arith.
  \textbf{49} (1988), no.~3, 303--306. \MR{932529}

\bibitem[HL98]{HumLac98}
Paul~D. Humke and Mikl\'{o}s Laczkovich, \emph{A visit to the {{E}rd\H{o}s}
  problem}, Proc. Amer. Math. Soc. \textbf{126} (1998), no.~3, 819--822.
  \MR{1425126}

\bibitem[Kah64]{K64}
Jean-Pierre Kahane, \emph{Sur les mauvaises r\'{e}partitions modulo {$1$}},
  Ann. Inst. Fourier (Grenoble) \textbf{14} (1964), no.~fasc. 2, 519--526.
  \MR{174545}

\bibitem[Kat18]{K18}
Asaf Katz, \emph{Generalizations of {F}urstenberg's {D}iophantine result},
  Ergodic Theory Dynam. Systems \textbf{38} (2018), no.~3, 1012--1024.
  \MR{3784252}

\bibitem[Kau68]{Kaufman68}
R.~Kaufman, \emph{Density of integer sequences}, Proc. Amer. Math. Soc.
  \textbf{19} (1968), 1501--1503. \MR{236141}

\bibitem[Kel08]{Keleti08}
Tam\'{a}s Keleti, \emph{Construction of one-dimensional subsets of the reals
  not containing similar copies of given patterns}, Anal. PDE \textbf{1}
  (2008), no.~1, 29--33. \MR{2431353}

\bibitem[KN74]{KN74}
L.~Kuipers and H.~Niederreiter, \emph{Uniform distribution of sequences}, Pure
  and Applied Mathematics, Wiley-Interscience [John Wiley \& Sons], New
  York-London-Sydney, 1974. \MR{0419394}

\bibitem[Kol97]{Kolountzakis97}
Mihail~N. Kolountzakis, \emph{Infinite patterns that can be avoided by
  measure}, Bull. London Math. Soc. \textbf{29} (1997), no.~4, 415--424.
  \MR{1446560}

\bibitem[Kol23]{K23}
\bysame, \emph{Sets of full measure avoiding {C}antor sets}, Bull. Hellenic
  Math. Soc. \textbf{67} (2023), 1--11. \MR{4541680}

\bibitem[KP25]{KolountzakisPapageorgiou23}
Mihail~N. Kolountzakis and Effie Papageorgiou, \emph{Large sets containing no
  copies of a given infinite sequence}, Anal. PDE \textbf{18} (2025), no.~1,
  93--108. \MR{4840395}

\bibitem[Kra99]{K99}
Bryna Kra, \emph{A generalization of {F}urstenberg's {D}iophantine theorem},
  Proc. Amer. Math. Soc. \textbf{127} (1999), no.~7, 1951--1956. \MR{1487320}

\bibitem[{\L}P09]{LabaPramanik09}
Izabella {\L}aba and Malabika Pramanik, \emph{Arithmetic progressions in sets
  of fractional dimension}, Geom. Funct. Anal. \textbf{19} (2009), no.~2,
  429--456. \MR{2545245}

\bibitem[LP22]{LiangPramanik}
Yiyu Liang and Malabika Pramanik, \emph{Fourier dimension and avoidance of
  linear patterns}, Adv. Math. \textbf{399} (2022), Paper No. 108252, 50.
  \MR{4384610}

\bibitem[M{\'{a}}t17]{Mathe17}
Andr{\'{a}}s M{\'{a}}th{\'{e}}, \emph{Sets of large dimension not containing
  polynomial configurations}, Adv. Math. \textbf{316} (2017), 691--709.
  \MR{3672917}

\bibitem[Pol79]{Pollington79}
A.~D. Pollington, \emph{On the density of sequence {$\{n\sb{k}\xi \}$}},
  Illinois J. Math. \textbf{23} (1979), no.~4, 511--515. \MR{540398}

\bibitem[Sch06]{S06}
Andrzej Schinzel, \emph{On the reduced length of a polynomial with real
  coefficients}, Funct. Approx. Comment. Math. \textbf{35} (2006), 271--306.
  \MR{2271619}

\bibitem[Sch07]{S07}
\bysame, \emph{On the reduced length of a polynomial with real coefficients.
  {II}}, Funct. Approx. Comment. Math. \textbf{37} (2007), 445--459.
  \MR{2363837}

\bibitem[Sve01]{S00}
R.~E. Svetic, \emph{The {{E}rd\H{o}s} similarity problem: a survey}, Real Anal.
  Exchange \textbf{26} (2000/01), no.~2, 525--539. \MR{1844133}

\bibitem[Sze75]{Szemeredi75}
E.~Szemer\'{e}di, \emph{On sets of integers containing no {$k$} elements in
  arithmetic progression}, Acta Arith. \textbf{27} (1975), 199--245.
  \MR{369312}

\bibitem[Tri82]{Tricot82}
Claude Tricot, Jr., \emph{Two definitions of fractional dimension}, Math. Proc.
  Cambridge Philos. Soc. \textbf{91} (1982), no.~1, 57--74. \MR{633256}

\end{thebibliography}

\end{document}